\documentclass[12pt]{article}

\usepackage{amsmath, amssymb, amsthm, mathrsfs}
\usepackage[margin=1in]{geometry}
\usepackage{graphicx}
\usepackage{enumitem}
\usepackage{fancyhdr}
\usepackage{lastpage}
\usepackage{scrextend}
\usepackage{enumitem}
\usepackage{tikz-cd}
\usepackage{xcolor}
\usepackage[doi=false,isbn=false,url=false,eprint=true]{biblatex}
\usepackage{hyperref}

\newcommand{\Z}{\mathbb{Z}}

\renewcommand{\P}{\mathbb{P}}
\newcommand{\B}{\mathbb{B}}

\newcommand{\dom}{\operatorname{dom}}

\newcommand{\id}{\operatorname{id}}

\newcommand{\restrict}{{\restriction}}

\newcommand{\Ext}{\operatorname{Ext}}

\newcommand{\Hom}{\operatorname{Hom}}
\newcommand{\PHom}{\operatorname{PHom}}

\newcommand{\Add}{\operatorname{Add}}

\newcommand{\ab}[1]{\mathopen{}\left\langle #1 \right\rangle\mathclose{}}

\newcommand{\set}[2]{\left\{ #1 \,:\, #2 \right\}}

\renewbibmacro{in:}{}

\setlist{topsep=0pt, partopsep=0pt, itemsep=0pt}
\setlist[enumerate]{label=(\arabic*)}

\allowdisplaybreaks
\theoremstyle{definition}
\newtheorem{definition}{Definition}[section]
\newtheorem{theorem}[definition]{Theorem}
\newtheorem{proposition}[definition]{Proposition}
\newtheorem{lemma}[definition]{Lemma}

\newtheorem{corollary}[definition]{Corollary}

\title{Condensed Whitehead Problem}
\author{Spencer Unger and Jerry Wei}
\date{}

\begin{document}
\maketitle
\sloppy

\begin{abstract}
    The Whitehead problem, which asks whether every abelian group \( A \) satisfying \( \Ext^1(A,\Z) = 0 \) is free, is independent of ZFC. However, this problem has an analogue in condensed mathematics that can be answered affirmatively. In this note, we give a new proof that an abelian group \( A \) of size \( \kappa \) is free if and only if \( \underline{\Ext}^1(\underline{A}, \underline{\Z})(S_\kappa) = 0 \), where \( S_\kappa \) is the Stone space of the Boolean completion of \( \operatorname{Add}(\omega,\kappa) \). 
\end{abstract}

\section{Introduction}

In recent years, Dustin Clausen and Peter Scholze developed a new framework, called condensed mathematics \cite{ClausenScholze2019}, for doing homological algebra on algebraic structures equipped with topology. Classical categories involving such structures are often poorly behaved; for instance, the category of topological abelian groups is not an abelian category. Condensed mathematics is intended to fix this issue by enriching classical categories with ``condensed objects''. Under this framework, classical objects such as sets and abelian groups have condensed counterparts, and thus many classical questions can be naturally reformulated in the condensed setting. In this note, we will consider a typical example --- the Whitehead problem. 

An abelian group is \emph{Whitehead} if \( \Ext^1(A,\Z) = 0 \), i.e., every short exact sequence of abelian groups \( 0 \to \Z \to B \to A \to 0 \) splits. This is a natural weakening of freeness -- recall that an abelian group \( A \) is free if and only if \( \Ext^1(A,C) = 0 \) for every abelian group \( C \). In the 1950s, J.~H.~C.~Whitehead asked whether every Whitehead group is free. Karl Stein gave an affirmative answer for countable groups \cite{Stein}, but the problem for uncountable groups remained open for twenty years. Surprisingly, in 1974, Saharon Shelah proved that Whitehead's problem is independent of ZFC, i.e., its answer cannot be decided by the usual axioms of set theory \cite{ShelahWhiteheadProblem}.

We now explain how to generalize the Whitehead problem in the condensed setting. A \emph{condensed abelian group} is a contravariant functor from the category of profinite spaces to the category of abelian groups satisfying certain sheafiness conditions. Every topological abelian group \( A \) induces a condensed abelian group \( \underline{A} \), given by \( \underline{A}(X) = C(X,A) \) being the abelian group of continuous functions from \( X \) to \( A \) for every profinite space \( X \). If the topology on \( A \) is not specified, we use the discrete topology by default. In the category of condensed abelian groups, there turns out to be a functor \( \underline{\Ext}^1 \) corresponding to the classical \( \Ext^1 \). This leads to the following definition:

\begin{definition}
    An abelian group \( A \) is \emph{\( X \)-condensed Whitehead} for a profinite space \( X \) if \( \underline{\Ext}^1(\underline{A}, \underline{\Z})(X) = 0 \). We say that \( A \) is \emph{condensed Whitehead} if it is \( X \)-condensed Whitehead for every profinite space \( X \).
\end{definition}

In analogy with the Whitehead problem, it is natural to ask: Is every condensed Whitehead group free? This condensed Whitehead problem turned out to be no longer independent of ZFC. In 2020, Clausen-Scholze answered the condensed Whitehead problem affirmatively using a structural
analysis of the category of condensed abelian groups \cite{ClausenScholzeYouTube}. Later in 2024, Jeffrey Bergfalk, Chris Lambie-Hanson and Jan \v{S}aroch gave a more elementary and constructive proof of a stronger result \cite{Lambie-Hanson}: 

\begin{theorem}\label{thm1}
    Let \( A \) be an abelian group of size \( \kappa \). Let \( \Add(\omega,\kappa) \) be the forcing for adding \( \kappa \)-many Cohen reals, and let \( S_\kappa \) be the Stone space of its Boolean completion. Then the following are equivalent: 
    \begin{enumerate}
        \item \( A \) is \( S_\kappa \)-condensed Whitehead;
        \item \( A \) is free.
    \end{enumerate}
\end{theorem}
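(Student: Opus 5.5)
The implication (2)\(\Rightarrow\)(1) is the easy one. If \( A \) is free, then \( \underline{A} \) is a direct sum of copies of \( \underline{\Z} \). Since \( \underline{\Z} \) is the free condensed abelian group on a point, it is projective, and so \( \underline{A} \) is projective as well. Hence \( \underline{\Ext}^1(\underline{A},\underline{\Z}) = 0 \) identically, and in particular its value at \( S_\kappa \) vanishes.

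For (1)\(\Rightarrow\)(2), I would first unwind the condensed Ext at an extremally disconnected space. Write \( \B \) for the Boolean completion of \( \Add(\omega,\kappa) \). Then \( S_\kappa \) is extremally disconnected, so evaluation at \( S_\kappa \) is exact, and \( \underline{\Ext}^1(\underline{A},\underline{\Z})(S_\kappa) \) should agree with \( \Ext^1 \) computed in condensed abelian groups of \( \underline{A} \) against the internal Hom object \( \underline{C(S_\kappa,\Z)} \). Concretely, I expect the vanishing to amount to the following: every \( \underline{\Z}[S_\kappa] \)-parametrized extension of \( \underline{A} \) by \( \underline{\Z} \) splits continuously over \( S_\kappa \). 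Taking a free resolution \( 0\to K\to F\to A\to 0 \), a cocycle is then a homomorphism \( K\to C(S_\kappa,\Z) \), and the condition says that every such homomorphism extends to a homomorphism \( F\to C(S_\kappa,\Z) \).

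The key translation is that \( C(S_\kappa,\Z) \) is exactly the group of \( \B \)-names for integers, namely maximal antichains labelled by integers. So the hypothesis reads: every homomorphism \( K\to\Z \) in \( V^{\B} \) that is given by a ground-model family of names extends, again via names, to \( F \). From this I would aim to show that \( A \) is Whitehead in the forcing extension \( V[G] \) by \( \Add(\omega,\kappa) \), or at least in enough of it. The target statement is that \( \Ext^1(A,\Z)=0 \) holds in \( V[G] \) for the ground-model group \( A \). The hypothesis gives splitting only for extensions coded by ground-model names, so the first thing to check is whether every homomorphism \( K\to\Z \) in \( V[G] \) has such a name. Since \( K \) is a ground-model set and \( \B \) is complete, this should hold, with \( \kappa \)-many parameters being fine.

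The remaining step, which I expect to be the main obstacle, is to show that a ground-model group which is Whitehead after adding \( \kappa \) Cohen reals, with \( |A|=\kappa \), is free. Freeness is absolute for this, since a basis in \( V[G] \) does not immediately give one in \( V \). I would therefore argue by induction on \( \kappa \) using Shelah's singular compactness together with Pontryagin's criterion in the countable case. The countable case goes through Stein's theorem, which is absolute. For regular uncountable \( \kappa \), suppose \( A \) is not free. Then there is a stationary set \( E\subseteq\kappa \) of points where a filtration \( (A_\alpha) \) has \( A_{\alpha+1}/A_\alpha \) non-free, or more precisely where \( A/A_\alpha \) fails to be \( \aleph_1 \)-free in the appropriate sense, and \( E \) remains stationary after ccc forcing. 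Using Cohen reals added at coordinates indexed by \( \alpha\in E \), I would build generic local homomorphisms \( A_{\alpha+1}\to\Z \) (or cocycles) that cannot be coherently extended, following the classical argument that adding Cohen reals kills Whiteheadness of non-free groups. In effect, this is the argument that a Cohen real diagonalizes against every ground-model candidate splitting on a countable non-free piece. I expect the delicate part to be arranging the cocycle coherently across the whole filtration while using only the \( \kappa \) available Cohen coordinates.
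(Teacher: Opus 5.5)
\textbf{Verdict: genuine gaps in (1)\(\Rightarrow\)(2).} Your (2)\(\Rightarrow\)(1) is fine. More elementarily: if \(A\) is free then \(K\) is a direct summand of \(F\), so any continuous \(\varphi : S_\kappa \to \Hom(K,\Z)\) extends by precomposing with a retraction \(F \to K\).

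\textbf{First gap: the reduction to forced Whiteheadness.} You reduce (1) to ``\(\Add(\omega,\kappa)\) forces \(A\) to be Whitehead'', but this rests on a false identification. Because \(S_\kappa\) is compact and \(\Z\) is discrete, every element of \(C(S_\kappa,\Z)\) has finite range. It therefore corresponds to a \emph{finite} labelled partition of unity in \(\B\), i.e.\ a \emph{bounded} name, not an arbitrary countable labelled maximal antichain. Completeness of \(\B\) gives names, but not bounded ones. So not every homomorphism \(K\to\Z\) in \(V[G]\) comes from some \(\Phi \in \Hom(K,C(S_\kappa,\Z))\) in \(V\). For example, send basis elements \(e_k\) of \(K\) to the least \(n\) with \(c_k(n)=1\), where \(c_k\) is the \(k\)-th Cohen real, and send the remaining basis elements to \(0\). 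No single condition bounds all these values, so hypothesis (1) says nothing about this map.

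This bounded/unbounded distinction is exactly why the paper never derives (3) from (1). Instead it passes to the weaker common consequence ``weakly \(2^\kappa\)-parametrized Whitehead''. There the input is a continuous (hence bounded) \(\varphi : 2^\kappa \to \Hom(K,\Z)\), and the output is a merely full, possibly unbounded, \(\psi\) on \(2^\lambda\) for some \(\lambda \ge \kappa\). That asymmetry and the free \(\lambda\) are also what let the induction pass to smaller cardinals, to subgroups, and to the quotients \(A_\alpha^*\), with one witness for \(A_\alpha^*\) that works for every \(\lambda\). Your induction on ``Whitehead after adding \(|A|\) Cohen reals'' lacks these closure properties. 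For instance, a subgroup of size \(\gamma<\kappa\) is Whitehead after \(\kappa\) Cohen reals, not after \(\gamma\).

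\textbf{Second gap: the regular uncountable case.} This case is the real content of the theorem, and you only gesture at it. ``The classical argument that Cohen reals kill Whiteheadness'' is not supplied. ``Diagonalizing against ground-model candidate splittings'' is also not the mechanism, because the splitting \(\psi\) to be defeated lives in the extension. The missing ideas are:
\begin{enumerate}
\item[(i)] From non-freeness of \(A_\alpha^*\) and freeness of \(F_{\alpha+1}/F_\alpha\), obtain a \emph{pair} \(f_{0\alpha}=0\), \(f_{1\alpha}\) on \(K_\alpha^*\) such that no full almost extensions \(h_0,h_1\) to \(F_{\alpha+1}\) satisfy \(h_0 =^* h_1\) on \(F_\alpha\).
\item[(ii)] Code all the pairs into a single continuous \(\varphi(x\oplus y)=\bigoplus_\alpha f_{x(\alpha)\alpha}(y)\), so that one Cohen bit \(x(\alpha)\) chooses which member is used at \(\alpha\). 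This resolves the ``coherence with only \(\kappa\) coordinates'' issue you flag.
\item[(iii)] By the ccc, each \(\psi(\cdot)(a)\) depends, on an open dense set, only on a countable \(\sigma_a\subseteq\kappa\) and the coordinates above \(\kappa\). Hence on a club \(C\), \(\psi\restrict F_\alpha\) ignores coordinate \(\alpha\).
\item[(iv)] For a limit \(\alpha\in S\cap C\), flip that bit via the open continuous maps \(r_0,r_1\). This yields almost extensions of \(f_{0\alpha}\) and \(f_{1\alpha}\) that agree on \(F_\alpha\), contradicting (i).
\end{enumerate}
Without these steps, or a forcing equivalent (e.g.\ factoring the extension so that the bit at \(\alpha\) is generic over a model containing \(\psi\restrict F_\alpha\)), the proposal does not establish (1)\(\Rightarrow\)(2).
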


In this note, we present a new proof of Theorem \ref{thm1}. The proof is based on Eklof and Mekler's proof \cite[\nopp XII.1.6]{EklofMekler1990} of the consistency of the positive answer to the classical Whitehead problem, which uses some version of a combinatorial principle called weak diamond. Our proof does not require understanding condensed mathematics, and is accessible to anyone with basic knowledge in algebra, topology, and set theory. We remark that although the theorem is motivated by forcing, the proof requires no familiarity with forcing beyond basic facts about partial orders and Boolean algebras.

Forcing appears naturally in solving the condensed Whitehead problem. Recall that every forcing notion is equivalent to a Boolean algebra. By Stone duality, the Stone spaces of Boolean algebras are precisely the profinite spaces, which are fundamental building blocks in condensed mathematics. Building on this duality, we obtain:

\begin{corollary}\label{cor:forcing-characterization}
    In Theorem \ref{thm1}, clauses (1) and (2) are also equivalent to the following:
    \begin{enumerate}
    \item[(3)] \( \Add(\omega,\kappa) \) forces \( A \) to be Whitehead.
    \end{enumerate}
\end{corollary}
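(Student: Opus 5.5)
We prove (2)\(\Rightarrow\)(3) and (3)\(\Rightarrow\)(1), and combine them with Theorem~\ref{thm1}.

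\textbf{(2)\(\Rightarrow\)(3).} Freeness of \(A\) is witnessed by a basis, and this witness persists to any forcing extension. In \(V[G]\), \(A\) is still generated by the basis, and the basis is still linearly independent, since linear independence is a statement about finite sums that is absolute. So \(A\) is free in \(V[G]\), hence Whitehead there.

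\textbf{Reducing \(\underline{\Ext}^1(\underline{A},\underline{\Z})(S_\kappa)\) to extensions.} Let \(\B\) be the Boolean completion of \(\Add(\omega,\kappa)\), so that \(S_\kappa\) is extremally disconnected. For extremally disconnected \(S\), evaluation at \(S\) is exact on condensed abelian groups, because such \(S\) are projective in the site. This gives an identification: an element of \(\underline{\Ext}^1(\underline{A},\underline{\Z})(S_\kappa)\) is an extension
\[ 0 \to C(S_\kappa,\Z) \to E \to C(S_\kappa,A) \]
of \(C(S_\kappa,\Z)\)-modules pulled back along the inclusion of constants \(A \hookrightarrow C(S_\kappa,A)\). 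I would use the same concrete description that the paper's proof of Theorem~\ref{thm1} uses, and assume it here.

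Next, translate continuous functions into names. A continuous \(f\colon S_\kappa\to A\) with \(A\) discrete is a finite partition of \(S_\kappa\) into clopens labelled by elements of \(A\). Since \(\B\) is complete, this identifies \(C(S_\kappa,\Z)\) with the nice \(\B\)-names for integers that are decided by finite antichains. Ground-model extensions \(0\to\Z\to B\to A\to 0\) then correspond to names for extensions of \(\check A\) by \(\Z\) in \(V[G]\), glued over clopens.

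\textbf{(3)\(\Rightarrow\)(1).} Fix a class in \(\underline{\Ext}^1(\underline{A},\underline{\Z})(S_\kappa)\), presented as a cocycle or extension over \(S_\kappa\). It determines a name \(\dot B\) for an extension \(0\to\Z\to\dot B\to\check A\to 0\) in \(V[G]\). By (3), some condition-indexed maximal antichain decides a splitting homomorphism \(\dot s\colon\check A\to\dot B\).

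The main obstacle is to turn this splitting, which may involve infinite antichains per element \(a\in A\), into one that is continuous on \(S_\kappa\), i.e.\ locally constant over finite partitions. Here extremal disconnectedness saves us. A \(\B\)-name for an integer corresponds to a continuous map from \(S_\kappa\) to the one-point compactification of \(\Z\) that is defined on a dense open set, rather than everywhere. For a splitting, though, the values \(s(a)\) differ from a fixed ground lift of \(a\) by integers, and one must show these integers are bounded on all of \(S_\kappa\). I would try to get this by working with the cocycle directly. The section \(\dot s\) gives, for each \(a\), an element of \(\underline{\Z}(U)\) for a dense open \(U\subseteq S_\kappa\), and the splitting condition needs to hold only on a comeager set. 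Then, because \(\B\) is complete, each such family of clopen pieces is realized by a regular open set, and compactness forces finiteness.

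If the boundedness fails in general, the fallback is to avoid the direct argument altogether. Show instead that (3) implies freeness of \(A\) by running the paper's weak-diamond argument from the proof of Theorem~\ref{thm1} inside \(V[G]\). That argument applies because \(\Add(\omega,\kappa)\) adds Cohen reals, which provide the needed instances of weak diamond. Freeness in \(V[G]\) then reflects to \(V\) via Shelah's singular compactness and Pontryagin's criterion, since \(\Add(\omega,\kappa)\) is ccc and preserves the relevant stationary sets. Finally, apply Theorem~\ref{thm1} to get (1) from (2).
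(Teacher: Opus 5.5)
Your (2)$\Rightarrow$(3) is fine, but (3)$\Rightarrow$(1) has a genuine gap, at exactly the step you flag. Pulling a splitting in $V[G]$ back through names gives an almost extension $\psi : 2^\kappa \to \PHom(F,\Z)$ that is full and continuous but in general \emph{not bounded}. If $\dot\psi(\check a)$ is decided only by an infinite maximal antichain, you get a function on a dense open subset of $S_\kappa$ with infinitely many values. No such function extends continuously to $S_\kappa$ with values in the discrete group $\Z$. Compactness bounds only maps defined on all of $S_\kappa$, so ``compactness forces finiteness'' does not apply. You also give no way to correct the splitting by an element of $\Hom(A,\Z)^{V[G]}$ so that it becomes bounded. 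This boundedness requirement is exactly what separates clause (2) of Theorem~\ref{thm:characterization} from its forcing analogue, and the paper never bridges it directly.

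Your fallback does not repair this. The paper's argument is not a weak diamond argument run inside $V[G]$, and no such principle need hold there: if $\kappa^{\aleph_1}=\kappa$, then $V[G]\models 2^{\aleph_0}=2^{\aleph_1}$, so $\Phi(\omega_1)$ fails. Moreover, Theorem~\ref{thm:main} needs the hypothesis that $A$ is weakly $2^\kappa$-parametrized Whitehead, and plain Whiteheadness inside $V[G]$ does not supply it. What is really used is that $A$ and all the data lie in $V$ while the Cohen reals are generic over $V$.

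The missing idea is to pull (3) back to $V$ via the Second Correspondence Theorem. Full continuous maps $2^\kappa \to \PHom(-,\Z)$ correspond to $\P_\kappa$-names for homomorphisms. So (3) says that every continuous $\varphi : 2^\kappa \to \Hom(K,\Z)$ has a full continuous almost extension $\psi : 2^\kappa \to \PHom(F,\Z)$. That is, $A$ is weakly $2^\kappa$-parametrized Whitehead with $\lambda=\kappa$ (Corollary~\ref{cor:forced_whitehead_implies_weakly_parametrized}), a notion that deliberately drops boundedness. Theorem~\ref{thm:main} then gives freeness in $V$ directly, so no reflection of freeness from $V[G]$ is needed. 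This yields (3)$\Rightarrow$(2), and Theorem~\ref{thm1} supplies (1).
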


In Section \ref{sec:condensed-whiteheadness-and-forcing}, we study characterizations of condensed Whiteheadness and its variants. In Subsection \ref{sec:forced-whiteheadness}, we characterize the property of being forced to be Whitehead, which will be used to prove Corollary \ref{cor:forcing-characterization}. Readers who are unfamiliar with forcing may safely skip this subsection, as the proof of Theorem \ref{thm1} does not depend on it. In Section \ref{sec:filtration}, we provide an expository discussion about filtrations of groups, which will be essential for the proof of the main theorem. Finally, in Section \ref{sec:main-theorem}, we use a variant of condensed Whiteheadness to prove a stronger theorem which implies both Theorem \ref{thm1} and Corollary \ref{cor:forcing-characterization}.

\section{Condensed Whiteheadness and Forcing}\label{sec:condensed-whiteheadness-and-forcing}   

\subsection{Condensed Whiteheadness for extremally disconnected spaces}

We will use an equivalent definition of condensed Whiteheadness, which does not require understanding \( \underline{\Ext}^1 \). Recall that a \emph{free resolution} of an abelian group \( A \) is a short exact sequence \( 0 \to K \to F \to A \to 0 \), where \( K,F \) are free abelian groups. We shall identify \( K \) as a subgroup of \( F \). By considering the induced long exact sequence of the functor \( \Hom(-,\Z) \), it is not hard to show that \( A \) is Whitehead if and only if every \( \varphi \in \Hom(K,\Z) \) extends to some \( \psi \in \Hom(F,\Z) \). 

We consider \( \Hom(K,\Z) \) as a subspace of the product space \( \Z^K \), where \( \Z \) is equipped with the discrete topology. Likewise, we consider \( \Hom(F,\Z) \) as a subspace of \( \Z^F \). 

\begin{definition}
    Let \( X \) be a topological space. Let \( A \) be an abelian group with \( 0 \to K \to F \to A \to 0 \). We say that \( A \) is \emph{\( X \)-parametrized Whitehead} if every continuous map \( \varphi : X \to \Hom(K,\Z) \) extends pointwise to some continuous map \( \psi : X \to \Hom(F,\Z) \), i.e., \( \psi(x) \restrict K = \varphi(x) \) for all \( x \in X \). 
\end{definition}

This definition is independent of the choice of free resolution. By \cite[\nopp Section 3]{Lambie-Hanson}, this characterizes \( X \)-condensed Whiteheadness when \( X \) is \emph{extremally disconnected}, i.e., the closure of every open subset of \( X \) is open:

\begin{proposition}\label{prop:extremally-disconnected}
    Let \( X \) be an extremally disconnected compact Hausdorff space. Then \( A \) is \( X \)-condensed Whitehead if and only if it is \( X \)-parametrized Whitehead. \qed
\end{proposition}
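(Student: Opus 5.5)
The plan is to compute \( \underline{\Ext}^1(\underline{A},\underline{\Z})(X) \) by resolving \( \underline{A} \) by the condensed version of a classical free resolution, and then to use that extremally disconnected profinite sets are projective. Fix \( 0 \to K \to F \to A \to 0 \) with \( K,F \) free. The functor \( M \mapsto \underline{M} \) (discrete groups, locally constant functions) is exact: a locally constant map \( X \to A \) from compact \( X \) factors through a finite clopen partition, so it lifts to \( F \). Hence \( 0 \to \underline{K} \to \underline{F} \to \underline{A} \to 0 \) is exact. Then apply \( \underline{\Hom}(-,\underline{\Z}) \) to get the long exact sequence
\[ 0 \to \underline{\Hom}(\underline{A},\underline{\Z}) \to \underline{\Hom}(\underline{F},\underline{\Z}) \to \underline{\Hom}(\underline{K},\underline{\Z}) \to \underline{\Ext}^1(\underline{A},\underline{\Z}) \to \underline{\Ext}^1(\underline{F},\underline{\Z}). \]

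Next, I would show \( \underline{\Ext}^1(\underline{F},\underline{\Z})=0 \) for free \( F=\bigoplus_I \Z \). Since \( \underline{F}=\bigoplus_I \underline{\Z} \) and \( \underline{\Z} \) is the free condensed abelian group on a point, we get \( \underline{\Ext}^i(\underline{F},\underline{\Z})=\prod_I \underline{\Ext}^i(\underline{\Z},\underline{\Z}) \). The terms with \( i\ge1 \) vanish because \( \underline{\Z}=\Z[\ast] \) is projective and internal Hom out of it is evaluation. So the sequence ends in \( 0 \), and \( \underline{\Ext}^1(\underline{A},\underline{\Z}) \) is the cokernel, as a sheaf, of \( \underline{\Hom}(\underline{F},\underline{\Z}) \to \underline{\Hom}(\underline{K},\underline{\Z}) \).

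The key identification is \( \underline{\Hom}(\underline{M},\underline{\Z})(X)=\Hom(\underline{M}\otimes\Z[X],\underline{\Z})=C(X,\Hom(M,\Z)) \) for \( M \) free, with \( \Hom(M,\Z)\subseteq \Z^M \) carrying the product topology. This holds because \( \underline{\Hom}(\underline{M},\underline{\Z}) \) is the product over a basis of copies of \( \underline{\Z} \), that is, the condensed set represented by \( \Z^I \). This is how the topology of the definition of \( X \)-parametrized Whitehead enters.

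Finally, evaluation at an extremally disconnected \( X \) is exact on condensed abelian groups, since such \( X \) are projective and every cover \( Y \twoheadrightarrow X \) splits. So the cokernel sheaf evaluated at \( X \) is the plain cokernel of \( C(X,\Hom(F,\Z)) \to C(X,\Hom(K,\Z)) \), the map being restriction. This cokernel vanishes exactly when every continuous \( \varphi \) extends pointwise-continuously, which is the definition. The main obstacle is the bookkeeping that the Ext computed in condensed abelian groups matches these sheaf cokernels, and in particular that \( \underline{\Ext}^1(\underline{F},\underline{\Z}) \) vanishes for infinite free \( F \). Here the commutation of \( \underline{\Ext} \) with direct sums in the first variable, turning them into products, must be justified; I would cite the standard derived-adjunction argument, as in \cite[Section 3]{Lambie-Hanson}.
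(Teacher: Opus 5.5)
Your argument is correct. The paper gives no proof of its own here and simply defers to \cite[Section 3]{Lambie-Hanson}; your sketch is the standard computation behind that result: exactness of $0\to\underline K\to\underline F\to\underline A\to 0$, vanishing of $\underline{\Ext}^1(\underline F,\underline\Z)$, the identification $\underline{\Hom}(\underline M,\underline\Z)(X)\cong C(X,\Hom(M,\Z))$, and exactness of evaluation at extremally disconnected $X$. The step you flagged is immediate: $\underline{\Hom}(\underline F,-)\cong\prod_I(-)$ is exact because products are exact in condensed abelian groups, so its higher derived functors $\underline{\Ext}^i(\underline F,-)$ vanish.
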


By Stone duality, extremally disconnected compact Hausdorff spaces are exactly the Stone spaces of complete Boolean algebras. The Stone space of a Boolean algebra is the space of ultrafilters on this algebra. We generalize this notion to arbitrary posets using maximal filters. 




\subsection{Maximal filter spaces and Boolean completions} 

The \emph{maximal filter space} \( S(\P) \) of a poset \( \P \) is the space of maximal filters on \( \P \), equipped with the topology generated by the basic open sets \( N_p = \set{x \in S(\P)}{p \in x} \) for \( p \in \P \). We are mostly interested in separative posets, since if \( \P \) is separative, then \( p \leq q \) if and only if \( N_p \subseteq N_q \). If \( \B \) is a Boolean algebra, then \( S(\B) \) is a profinite space, and is commonly known as the \emph{Stone space} of \( \B \); moreover, if \( \B \) is complete, then \( S(\B) \) is extremally disconnected. 

Recall that for every poset \( \P \), there is a complete Boolean algebra \( \B \) and a dense embedding \( i : \P \to \B \), known as the \emph{Boolean completion} of \( \P \). We will establish a correspondence between homomorphisms parametrized by \( S(\P) \) and those by \( S(\B) \).

For abelian groups \( A,B \), let \( \PHom(A,B) \) denote the set of homomorphisms from a subgroup of \( A \) to \( B \). We may regard elements of \( \PHom(A,B) \) as functions from \( A \) to \( B \cup \{ \star \} \) which take value \( \star \) whenever they are undefined. We equip \( B \cup \{ \star \} \) with the weakest topology such that each singleton \( \{ b \} \subseteq B \) is open, and consider \( \PHom(A,B) \) as a subspace of the product space \( (B \cup \{ \star \})^A \). Hence, a map \( \varphi : X \to \PHom(A,B) \) is continuous if and only if \( \set{x \in X}{\varphi(x)(a) = b} \) is open for every \( a \in A \) and \( b \in B \). We do not require \( \set{x \in X}{\varphi(x)(a) = \star} \) to be open. 

\begin{definition}
    Let \( \varphi : X \to \PHom(A,B) \). The \emph{domain} of \( \varphi \) at \( a \in A \) is the set 
    \[ \dom(\varphi,a) = \set{x \in X}{\varphi(x)(a) \text{ is defined}} \]
    We say that \( \varphi \) is \emph{full} if \( \dom(\varphi,a) \) contains an open dense set for all \( a \in A \). 
\end{definition}

Let \( \varphi,\psi : X \to \PHom(A,B) \) be full. For \( H \subseteq A \), we say \emph{\( \varphi =^* \psi \) on \( H \)} if for all \( a \in H \), there is an open dense set of \( x \in X \) such that \( \varphi(x)(a) = \psi(x)(a) \). We simply write \emph{\( \varphi =^* \psi \)} if \( \varphi =^* \psi \) on \( A \). We define \( \varphi + \psi \) to be the full map given by \( (\varphi + \psi)(x)(a) = \varphi(x)(a) + \psi(x)(a) \) whenever \( \varphi(x)(a) \) and \( \psi(x)(a) \) are defined, and \( -\varphi \) is defined similarly. For an open subset \( N \subseteq X \), we write \( \varphi[N](a) = b \) if \( \varphi(x)(a) = b \) whenever \( x \in N \) and \( \varphi(x)(a) \) is defined.

Note that if \( \mathbb{D} \subseteq \P \) is dense, then \( \bigcup_{p \in \mathbb{D}} N_p \) is open dense in \( S(\P) \). Conversely, if \( D \subseteq S(\P) \) is open dense, then \( \set{p \in \P}{N_p \subseteq D} \)  is open dense in \( \P \).

\begin{definition}
    A map \( \varphi : X \to \PHom(A,B) \) is \emph{bounded} if for all \( a \in A \), the set \( \set{b \in B}{\exists x \in X (\varphi(x)(a) = b)} \) is finite. 
\end{definition}

\begin{theorem}[First Correspondence Theorem]\label{thm:bounded-correspondence}
    Let \( \P \) be a separative poset and let \( i : \P \to \B \) be its Boolean completion. Then there is a correspondence
    \[\begin{tikzcd}
        {\{ \text{bounded full continuous maps \( \varphi : S(\P) \to \PHom(A,B) \)} \}} \\
        {\{ \text{continuous maps \( \varphi' : S(\B) \to \Hom(A,B) \)} \}}
        \arrow["\alpha"', shift right, from=1-1, to=2-1]
        \arrow["\beta"', shift right, from=2-1, to=1-1]
    \end{tikzcd}\]
    such that if \( \varphi, \varphi' \) are corresponding (either \( \alpha(\varphi) = \varphi' \) or \( \beta(\varphi') = \varphi \)), then for \( a \in A \), \( b \in B \), \( x \in \dom(\varphi,a) \), and \( U \in S(\B) \), we have
    \begin{align}
        \varphi(x)(a) = b &\iff \exists p \in x (\varphi'[N_{i(p)}](a) = b), \\
        \bigvee \set{i(p)}{\varphi[N_p](a) = b} \in U &\iff \varphi'(U)(a) = b.
    \end{align}
    Moreover, if \( \psi, \psi' \) are corresponding, then
    \[ \varphi =^* \psi \iff \varphi' = \psi'. \]
    \begin{proof}
        We first define the correspondence map \( \alpha \). Let \( \varphi : S(\P) \to \PHom(A,B) \) be a bounded full continuous map. Fix \( a \in A \) and \( U \in S(\B) \). We define \( \varphi'(U)(a) \) as follows. Let 
        \[ F = \set{b \in B}{\exists x \in S(\P) (\varphi(x)(a) = b)} = \set{b \in B}{\exists p \in \P (\varphi[N_p](a) = b)}. \]
        Since \( \varphi \) is bounded, \( F \) is finite. For each \( b \in F \), let \( c_b = \bigvee \set{i(p)}{\varphi[N_p](a) = b} \). Then \( \set{c_b}{b \in F} \) is a finite partition of \( \B \), so we can define \( \varphi'(U)(a) = b \) to be the unique \( b \in F \) such that \( c_b \in U \). 
        
        The equivalence (2) is obvious from the definition. We check that (1) holds. Let \( x \in \dom(\varphi,a) \). If \( \varphi(x)(a) = b \), then by continuity, there is \( p \in x \) such that \( \varphi[N_p](a) = b \). Then \( i(p) \leq c_b \), so for all \( U \in N_{i(p)} \), we have \( c_b \in U \) and thus \( \varphi'(U)(a) = b \). Conversely, assume there is \( p \in x \) such that \( \varphi'[N_{i(p)}](a) = b \). Suppose \( \varphi(x)(a) = b' \neq b \). Then there is \( p' \in x \) such that \( \varphi'[N_{i(p')}](a) = b' \). Since \( x \) is a filter, there is \( r \leq p,p' \). Then \( i(r) \leq i(p),i(p') \), so if \( U \in N_{i(r)} \), we have \( \varphi'(U)(a) = b \) and \( \varphi'(U)(a) = b' \), a contradiction.

        We now define the correspondence map \( \beta \). Let \( \varphi' : S(\B) \to \Hom(A,B) \) be given. Define \( \varphi : S(\P) \to \PHom(A,B) \) by
        \[ \varphi(x)(a) = b \iff \exists p \in x (\varphi'[N_{i(p)}](a) = b). \]
        Then \( \varphi \) is bounded since \( \set{\varphi'(U)(a)}{U \in S(\B)} \) is finite, being a continuous image of the compact space \( S(\B) \). 
        
        The equivalence (1) is obvious from the definition. We check that (2) holds. For \( b \in B \), let 
        \[ c_b = \bigvee \set{i(p)}{\varphi[N_p](a) = b}. \]
        We need to show that \( c_b \in U \) if and only if \( \varphi'(U)(a) = b \).
        If \( \varphi'(U)(a) = b \), then by continuity, there is \( c \in U \) such that \( \varphi'[N_c](a) = b \). We show that \( c \leq c_b \). Since \( i[\P] \) is dense in \( \B \), it suffices to show that \( i(p) \leq c_b \) for all \( i(p) \leq c \). If \( i(p) \leq c \), then \( \varphi'[N_{i(p)}](a) = b \) and thus \( \varphi[N_p](a) = b \), so \( i(p) \leq c_b \). Conversely, assume \( c_b \in U \). Suppose \( \varphi'(U)(a) = b' \neq b \). Then \( c_{b'} \in U \). But it is not hard to see that \( c_b \wedge c_{b'} = 0 \), contradicting that \( U \) is an ultrafilter. 

        Finally, we show that \( \varphi =^* \psi \) if and only if \( \varphi' = \psi' \). Assume \( \varphi =^* \psi \). We show that 
        \[ \bigvee \set{i(p)}{\varphi[N_p](a) = b} = \bigvee \set{i(p)}{\psi[N_p](a) = b}. \]
        We first show that the left hand side is less than or equal to the right hand side. Let \( q \) be such that \( \varphi[N_q](a) = b \). Since \( \varphi =^* \psi \), there is an open dense subset \( D \subseteq N_q \) such that \( \psi[D](a) = b \). Hence, \( \set{p \leq q}{N_p \subseteq D} \) is dense below \( q \), so 
        \[ i(q) = \bigvee \set{i(p)}{N_p \subseteq D} \leq \bigvee \set{i(p)}{\psi[N_p](a) = b}. \]
        Taking the supremum over all \( q \)'s such that \( \varphi[N_q](a) = b \) gives the desired inequality. The other inequality is similar. Then it follows from (2) that \( \varphi' = \psi' \). Conversely, if \( \varphi' = \psi' \), then \( \varphi(x)(a) = \psi(x)(a) \) for all \( x \in \dom(\varphi,a) \cap \dom(\psi,a) \) by (1). Since \( \dom(\varphi,a) \cap \dom(\psi,a) \) is open dense, we have \( \varphi =^* \psi \). 
    \end{proof}
\end{theorem}

Let \( K \) be a subgroup of \( F \). We say that \( \psi : X \to \PHom(F,B) \) \emph{almost extends} \( \varphi : X \to \PHom(K,B) \) if for all \( a \in K \), there is an open dense set of \( x \in X \) such that \( \psi(x)(a) = \varphi(x)(a) \). The correspondence theorem immediately yields:

\begin{corollary}
    Let \( K \) be a subgroup of \( F \). Let \( \P \) be a separative poset. Let \( \varphi : S(\P) \to \PHom(K,B) \) and \( \psi : S(\P) \to \PHom(F,B) \) be bounded full continuous maps. Using the notation in the correspondence theorem, we have
    \[ \text{\( \psi \) almost extends \( \varphi \)} \iff \text{\( \psi' \) extends \( \varphi' \)} \]
    \qed
\end{corollary}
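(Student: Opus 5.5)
The plan is to reduce the corollary to the ``moreover'' clause of the First Correspondence Theorem. The tool is the restriction map: if \( \psi : S(\P) \to \PHom(F,B) \) is bounded, full and continuous, then its restriction \( \psi\restrict K : S(\P) \to \PHom(K,B) \), given by \( (\psi\restrict K)(x)(a) = \psi(x)(a) \) for \( a \in K \), is again bounded, full and continuous. Each of these three properties is checked one \( a \) at a time, so it passes to \( a \in K \) immediately. Likewise, for \( \psi' : S(\B) \to \Hom(F,B) \), the restriction \( \psi'\restrict K \) defined by \( U \mapsto \psi'(U)\restrict K \) is continuous into \( \Hom(K,B) \).

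The first step is to show that \( \alpha \) commutes with restriction: \( (\psi\restrict K)' = \psi'\restrict K \). Fix \( a \in K \) and \( U \in S(\B) \). By equivalence (2) of the correspondence theorem applied to \( \psi \) and \( \psi' \), we have \( \psi'(U)(a) = b \) iff \( \bigvee\set{i(p)}{\psi[N_p](a) = b} \in U \). The condition \( \psi[N_p](a) = b \) involves only the values at \( a \in K \), so it is the same as \( (\psi\restrict K)[N_p](a) = b \). Applying (2) to \( \psi\restrict K \) then gives \( (\psi\restrict K)'(U)(a) = b \) iff the same supremum lies in \( U \). So both maps agree at every \( a \in K \).

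Next, I unwind the definitions. By definition, \( \psi \) almost extends \( \varphi \) exactly when \( \psi\restrict K =^* \varphi \) as full maps \( S(\P) \to \PHom(K,B) \). Both sides are bounded full continuous, so the ``moreover'' clause of the correspondence theorem, applied to the pairs \( (\psi\restrict K, (\psi\restrict K)') \) and \( (\varphi, \varphi') \), gives
\[ \psi\restrict K =^* \varphi \iff (\psi\restrict K)' = \varphi'. \]
By the first step, \( (\psi\restrict K)' = \psi'\restrict K \). Finally, \( \psi'\restrict K = \varphi' \) means exactly that \( \psi'(U)\restrict K = \varphi'(U) \) for all \( U \), that is, \( \psi' \) extends \( \varphi' \).

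I expect no serious obstacle here. The only point needing care is that \( \alpha \) commutes with restriction to \( K \), and this is immediate from the explicit formula (2).
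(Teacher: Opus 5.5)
Your proof is correct and is essentially the argument the paper intends when it says the corollary follows immediately from the First Correspondence Theorem. The correspondence is defined one coordinate \( a \) at a time, so it commutes with restriction to \( K \), and the ``moreover'' clause applied to \( \psi\restrict K \) and \( \varphi \) gives the equivalence. Your explicit check via (2) that \( (\psi\restrict K)' = \psi'\restrict K \) is the detail the paper leaves implicit; it holds whichever direction of the correspondence is meant, since (2) determines \( \psi' \) from \( \psi \).
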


This allows us to characterize \( S(\B) \)-condensed Whiteheadness in terms of \( S(\P) \):

\begin{theorem}\label{thm:characterization}
    Let \( A \) be an abelian group with free resolution \( 0 \to K \to F \to A \to 0 \). Let \( \P \) be a separative poset and let \( \B \) be its Boolean completion. Then the following are equivalent:
    \begin{enumerate}
        \item \( A \) is \( S(\B) \)-condensed Whitehead;
        \item Every bounded full continuous map \( \varphi : S(\P) \to \PHom(K,\Z) \) almost extends to some bounded full continuous map \( \psi : S(\P) \to \PHom(F,\Z) \). 
    \end{enumerate} \qed
\end{theorem}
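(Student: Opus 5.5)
The plan is to chain Proposition \ref{prop:extremally-disconnected} with the First Correspondence Theorem \ref{thm:bounded-correspondence} and the corollary following it. Since \( \B \) is a complete Boolean algebra, \( S(\B) \) is an extremally disconnected compact Hausdorff space. Proposition \ref{prop:extremally-disconnected} then says that (1) is equivalent to \( A \) being \( S(\B) \)-parametrized Whitehead. That is, every continuous \( \varphi' : S(\B) \to \Hom(K,\Z) \) extends pointwise to a continuous \( \psi' : S(\B) \to \Hom(F,\Z) \). It remains to translate this statement across the correspondence \( \alpha, \beta \) of Theorem \ref{thm:bounded-correspondence}, applied once with \( A = K \) and once with \( A = F \), with \( B = \Z \) in both cases.

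For (1) \( \Rightarrow \) (2), let \( \varphi : S(\P) \to \PHom(K,\Z) \) be bounded, full and continuous, and set \( \varphi' = \alpha(\varphi) \). By parametrized Whiteheadness, \( \varphi' \) extends to some continuous \( \psi' : S(\B) \to \Hom(F,\Z) \). Put \( \psi = \beta(\psi') \), which is a bounded full continuous map \( S(\P) \to \PHom(F,\Z) \). Then \( \psi \) and \( \psi' \) correspond, and \( \varphi \) and \( \varphi' \) correspond, and \( \psi' \) extends \( \varphi' \). The corollary after Theorem \ref{thm:bounded-correspondence} then gives that \( \psi \) almost extends \( \varphi \).

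For (2) \( \Rightarrow \) (1), let \( \varphi' : S(\B) \to \Hom(K,\Z) \) be continuous and set \( \varphi = \beta(\varphi') \). By (2), some bounded full continuous \( \psi \) almost extends \( \varphi \). Let \( \psi' = \alpha(\psi) \). By the corollary, \( \psi' \) extends \( \varphi' \). Since \( \psi' \) is continuous into \( \Hom(F,\Z) \), this proves (1).

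The main point to check is that the corollary applies even though the correspondence is only stated as a pair of maps \( \alpha, \beta \), not as a proven bijection. The corollary is phrased for corresponding pairs in either direction, and it holds because its proof uses only properties (1) and (2) of Theorem \ref{thm:bounded-correspondence}. I would verify that the "almost extends" \( \iff \) "extends" argument indeed uses only these properties: the restriction \( \psi' \restrict K \) and the map \( \varphi' \) are both determined through (2) by the same suprema \( \bigvee \set{i(p)}{\,\cdot\,[N_p](a) = b} \) for \( a \in K \). Once that is confirmed, both directions are immediate and no further computation is needed.

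A second, minor point is that \( \beta(\varphi') \) really is full. Its domain at \( a \) contains \( \bigcup \set{N_p}{\varphi'[N_{i(p)}](a) \text{ constant}} \). This set is open dense because \( \varphi'(\cdot)(a) \) is locally constant on \( S(\B) \) and \( i[\P] \) is dense in \( \B \).
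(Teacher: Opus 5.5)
Your proposal is correct and is the argument the paper leaves implicit behind the \qed: Proposition \ref{prop:extremally-disconnected} reduces (1) to \( S(\B) \)-parametrized Whiteheadness, and the corollary to the First Correspondence Theorem then moves extensions back and forth along \( \alpha \) and \( \beta \). Your worry about directions is already settled by the theorem as stated: it defines ``corresponding'' to cover both \( \alpha(\varphi)=\varphi' \) and \( \beta(\varphi')=\varphi \), and since \( \alpha,\beta \) are defined elementwise they commute with restriction to \( K \), so no re-examination of the proof is needed.
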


This will be useful later when we prove Theorem \ref{thm1}, since the maximal filter space of \( \Add(\omega,\kappa) \) is homeomorphic to \( 2^\kappa \), which is easier to work with than the Stone space of its Boolean completion.

\subsection{Weak \( 2^\kappa \)-parametrized Whiteheadness}\label{sec:weak-parametrized}

Let \( \P_\kappa = \Add(\omega,\kappa) \) be the forcing for adding \( \kappa \)-many Cohen reals, i.e., 
\[ \P_\kappa = \{ \text{finite partial functions \( p : \kappa \to 2 \)} \} \]
ordered by extension. Its maximal filter space is homeomorphic to \( 2^\kappa \) with the product topology via the identification \( F \mapsto \bigcup F \). For \( p \in \P_\kappa \), we shall also write \( N_p = \set{x \in 2^\kappa}{p \subseteq x} \) for the basic open sets of \( 2^\kappa \).


We will use the following weakening of \( 2^\kappa \)-parametrized Whiteheadness. 

\begin{definition}
    Let \( A \) be an abelian group with free resolution \( 0 \to K \to F \to A \to 0 \). We say that \( A \) is \emph{weakly \( 2^\kappa \)-parametrized Whitehead} if for all continuous map \( \varphi : 2^\kappa \to \Hom(K,\Z) \), there is \( \lambda \geq \kappa \) and a full continuous map \( \psi : 2^\lambda \to \PHom(F,\Z) \) almost extending \( \varphi \) in the sense that for all \( a \in K \), there is an open dense set of \( x \in 2^\lambda \) such that \( \psi(x)(a) = \varphi(x \restrict \kappa)(a) \).
\end{definition}

This definition is independent of the choice of free resolution. Suppose \( A \) is weakly \( 2^\kappa \)-parametrized Whitehead with respect to the free resolution \( 0 \to K \to F \to A \to 0 \). Let \( 0 \to K' \to F' \to A \to 0 \) be another free resolution of \( A \). By basic homological algebra, we have the following diagram
\[\begin{tikzcd}
    0 & {K'} & {F'} & A & 0 \\
    0 & K & F & A & 0 \\
    0 & {K'} & {F'} & A & 0
    \arrow[from=1-1, to=1-2]
    \arrow[from=1-2, to=1-3]
    \arrow["{f \restrict K'}"', from=1-2, to=2-2]
    \arrow[from=1-3, to=1-4]
    \arrow["{f}", from=1-3, to=2-3]
    \arrow["h"'{pos=0.3}, from=1-3, to=3-2]
    \arrow[from=1-4, to=1-5]
    \arrow["{\mathrm{id}}", from=1-4, to=2-4]
    \arrow[from=1-4, to=3-3]
    \arrow[from=2-1, to=2-2]
    \arrow[from=2-2, to=2-3]
    \arrow["{g \restrict K}"', from=2-2, to=3-2]
    \arrow[from=2-3, to=2-4]
    \arrow["{g}"', from=2-3, to=3-3]
    \arrow[from=2-4, to=2-5]
    \arrow["{\mathrm{id}}", from=2-4, to=3-4]
    \arrow[from=3-1, to=3-2]
    \arrow[from=3-2, to=3-3]
    \arrow[from=3-3, to=3-4]
    \arrow[from=3-4, to=3-5]
\end{tikzcd}\]
where the squares commute and the slanted maps give a chain homotopy from \( g \circ f \) to \( \id \). In particular, \( g(f(a)) - a = h(a) \) for all \( a \in K' \). Now given \( \varphi' : 2^\kappa \to \Hom(K',\Z) \), let \( \varphi : 2^\kappa \to \Hom(K,\Z) \) be defined by \( \varphi(x)(a) = \varphi'(x)(g(a)) \). Let \( \psi : 2^\lambda \to \PHom(F,\Z) \) be an almost extension of \( \varphi \). Let \( \psi' : 2^\lambda \to \PHom(F',\Z) \) be given by \( \psi'(x)(a) = \psi(x)(f(a)) - \varphi'(x \restrict \kappa)(h(a)) \) whenever it is defined. It is not hard to see that \( \psi' \) is full, and for \( a \in K' \), if \( \psi(x)(f(a)) = \varphi(x \restrict \kappa)(f(a)) \), then 
\[ \begin{split}
    \psi'(x)(a) &= \psi(x)(f(a)) - \varphi'(x \restrict \kappa)(h(a)) \\
    &= \varphi(x \restrict \kappa)(f(a)) - \varphi'(x \restrict \kappa)(h(a)) \\
    &= \varphi'(x \restrict \kappa)((g \circ f)(a)) - \varphi'(x \restrict \kappa)(h(a)) \\
    &= \varphi'(x \restrict \kappa)(a).
\end{split} \]

Since \( 2^\kappa \) is compact, every continuous map \( \varphi : 2^\kappa \to \Hom(A,B) \) is also a bounded full continuous map \( 2^\kappa \to \PHom(A,B) \). This fact together with Theorem \ref{thm:characterization} immediately implies the following lemma:

\begin{lemma}\label{lem:condensed_whitehead_implies_weakly_parametrized}
    Let \( S_\kappa \) be the Stone space of the Boolean completion of \( \P_\kappa \).
    Assume \( A \) satisfies either one of the following conditions:
    \begin{enumerate}
        \item \( A \) is \( S_\kappa \)-condensed Whitehead, or
        \item \( A \) is \( 2^\kappa \)-parametrized Whitehead.
    \end{enumerate} 
    Then \( A \) is weakly \( 2^\kappa \)-parametrized Whitehead. \qed
\end{lemma}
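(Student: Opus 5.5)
In both cases I will take \( \lambda = \kappa \) in the definition of weak \( 2^\kappa \)-parametrized Whiteheadness. Then \( x \restrict \kappa = x \), and the required almost-extension condition just says that \( \psi \) almost extends \( \varphi \) on \( 2^\kappa \). Fix a free resolution \( 0 \to K \to F \to A \to 0 \); by the remark above, the choice of resolution does not matter. Fix a continuous map \( \varphi : 2^\kappa \to \Hom(K,\Z) \). Because \( 2^\kappa \) is compact, the image of each evaluation \( x \mapsto \varphi(x)(a) \) is a compact subset of the discrete space \( \Z \), hence finite. So \( \varphi \) is bounded. It is also full, since every domain \( \dom(\varphi,a) \) is all of \( 2^\kappa \). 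Finally, \( \varphi \) is continuous as a map into \( \PHom(K,\Z) \), because the sets \( \set{x}{\varphi(x)(a)=b} \) are open.

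\emph{Case (2).} Suppose \( A \) is \( 2^\kappa \)-parametrized Whitehead. Then \( \varphi \) extends pointwise to a continuous \( \psi : 2^\kappa \to \Hom(F,\Z) \). Viewed as a map into \( \PHom(F,\Z) \), \( \psi \) is full, since it is everywhere defined, and it is continuous. It satisfies \( \psi(x)(a) = \varphi(x)(a) \) for every \( x \) and every \( a \in K \), so it almost extends \( \varphi \) with \( \lambda = \kappa \).

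\emph{Case (1).} Suppose \( A \) is \( S_\kappa \)-condensed Whitehead. The poset \( \P_\kappa \) is separative, and its maximal filter space \( S(\P_\kappa) \) is identified with \( 2^\kappa \) via \( F \mapsto \bigcup F \). Under this identification the basic open sets \( N_p \) on the two sides coincide, so the notions of continuity, fullness, boundedness and almost extending transfer along the identification. By Theorem \ref{thm:characterization}, applied with \( \P = \P_\kappa \) and \( \B \) its Boolean completion so that \( S(\B) = S_\kappa \), clause (1) gives clause (2) of that theorem. Applying that clause to the bounded full continuous map \( \varphi \), regarded as a map on \( S(\P_\kappa) \), yields a bounded full continuous \( \psi : S(\P_\kappa) \to \PHom(F,\Z) \) almost extending \( \varphi \). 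Transported to \( 2^\kappa \), this \( \psi \) is exactly what the definition requires with \( \lambda = \kappa \).

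I do not expect a real obstacle, as the lemma is essentially bookkeeping. The only point needing care is checking that the homeomorphism \( S(\P_\kappa) \cong 2^\kappa \) respects the relevant notions. This holds because both the openness conditions defining continuity and the "open dense set" conditions defining fullness and almost-extension are phrased purely in terms of the topology and the basic sets \( N_p \), and these are preserved by the homeomorphism.
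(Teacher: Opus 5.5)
Your proposal is correct and follows the same route as the paper: compactness of \( 2^\kappa \) makes \( \varphi \) a bounded full continuous map into \( \PHom(K,\Z) \), case (1) then follows from Theorem~\ref{thm:characterization} applied to the separative poset \( \P_\kappa \) with \( \lambda = \kappa \), and case (2) is immediate. Your check that the identification \( S(\P_\kappa) \cong 2^\kappa \) preserves the basic sets \( N_p \) is the bookkeeping the paper leaves implicit.
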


\begin{lemma}\label{lem:weakly_parametrized_implies_smaller_weakly_parametrized}
    If \( A \) is weakly \( 2^\kappa \)-parametrized Whitehead, then it is weakly \( 2^\gamma \)-parametrized Whitehead for all \( \gamma \leq \kappa \). \qed
\end{lemma}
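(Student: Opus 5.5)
Given a continuous \( \varphi : 2^\gamma \to \Hom(K,\Z) \) with \( \gamma \leq \kappa \), I will pull it back along the restriction map. Let \( \pi : 2^\kappa \to 2^\gamma \) be \( \pi(x) = x \restrict \gamma \), and set \( \tilde\varphi = \varphi \circ \pi : 2^\kappa \to \Hom(K,\Z) \). This is continuous as a composition of continuous maps. Since \( A \) is weakly \( 2^\kappa \)-parametrized Whitehead, there is \( \lambda \geq \kappa \) and a full continuous \( \psi : 2^\lambda \to \PHom(F,\Z) \) such that for each \( a \in K \), the set of \( x \in 2^\lambda \) with \( \psi(x)(a) = \tilde\varphi(x \restrict \kappa)(a) \) contains an open dense set.

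I claim the same \( \lambda \) and \( \psi \) witness weak \( 2^\gamma \)-parametrized Whiteheadness for \( \varphi \). Clearly \( \lambda \geq \kappa \geq \gamma \). For \( x \in 2^\lambda \) we have \( (x \restrict \kappa) \restrict \gamma = x \restrict \gamma \), so
\[ \tilde\varphi(x \restrict \kappa)(a) = \varphi(x \restrict \gamma)(a). \]
Hence the open dense set from the previous paragraph is exactly an open dense set on which \( \psi(x)(a) = \varphi(x \restrict \gamma)(a) \). Fullness and continuity of \( \psi \) are unchanged, so \( \psi \) almost extends \( \varphi \) in the required sense.

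There is no real obstacle; the only point to check is the continuity of the restriction map, which is immediate from the product topology. The argument does not depend on the choice of free resolution, since we use the same \( 0 \to K \to F \to A \to 0 \) throughout, and independence of that choice was already established.
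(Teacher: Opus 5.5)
Your proof is correct and is the routine argument the paper leaves implicit (it states the lemma with no proof). Pull \( \varphi \) back along the continuous restriction \( 2^\kappa \to 2^\gamma \), then reuse the same \( \lambda \geq \kappa \geq \gamma \) and \( \psi \), noting that \( (x \restrict \kappa) \restrict \gamma = x \restrict \gamma \).
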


\begin{lemma}
    If \( A \) is weakly \( 2^\kappa \)-parametrized Whitehead, then every subgroup of \( A \) is weakly \( 2^\kappa \)-parametrized Whitehead.
    \begin{proof}
        For any abelian group \( A \), there is a canonical free resolution 
        \[ \begin{tikzcd}
            0 & K_A & F_A & A & 0
            \arrow[from=1-1, to=1-2]
            \arrow[from=1-2, to=1-3]
            \arrow["\pi_A", from=1-3, to=1-4]
            \arrow[from=1-4, to=1-5]
        \end{tikzcd} \]
        where \( F_A \) is the free abelian group generated by \( A \), and \( \pi_A : F_A \to A \) is the natural projection map with kernel \( K_A \). 
        
        Let \( B \) be a subgroup of \( A \). We identify \( F_B \) as a subgroup of \( F_A \), and note that \( K_B = K_A \cap F_B \). Consider the map \( f : K_A \to F_A/F_B \) given by \( f(a) = a + F_B \). Note that \( \ker(f) = K_B \), so it induces an injection \( K_A/K_B \to F_A/F_B \). Since \( F_A/F_B \cong F_{A \setminus B} \) is free, so is \( K_A/K_B \). It follows that there is a retraction \( r : K_A \to K_B \) with \( r \restrict K_B = \id \).
        
        Assume \( A \) is weakly \( 2^\kappa \)-parametrized Whitehead. We show that the same is true for \( B \). Let \( \varphi : 2^\kappa \to \Hom(K_B,\Z) \) be continuous. Define \( \Phi : 2^\kappa \to \Hom(K_A,\Z) \) by \( \Phi(x) = \varphi(x) \circ r \). Let \( \Psi : 2^\lambda \to \PHom(F_A,\Z) \) be a full continuous almost extension of \( \Phi \). Let \( \psi : 2^\lambda \to \PHom(F_B,\Z) \) be defined by \( \psi(x) = \Psi(x) \restrict F_B \). Then \( \psi \) is full continuous since \( \Psi \) is. For \( b \in K_B \), if \( \Psi(x)(b) = \Phi(x \restrict \kappa)(b) \), then 
        \[ \psi(x)(b) = \Psi(x)(b) = \Phi(x \restrict \kappa)(b) = \varphi(x \restrict \kappa)(r(b)) = \varphi(x \restrict \kappa)(b). \]
    \end{proof}
\end{lemma}

\begin{lemma}\label{lem:countable_whitehead}
    Let \( A \) be a countable abelian group. If \( A \) is weakly \( 2^\kappa \)-parametrized Whitehead, then \( A \) is Whitehead and thus free.
    \begin{proof}
        Let \( 0 \to K \to F \to A \to 0 \) be a free resolution of \( A \) such that \( F \) is countable. Let \( f \in \Hom(K,\Z) \) be given. Define \( \varphi : 2^\kappa \to \Hom(K,\Z) \) by \( \varphi(x) = f \) for all \( x \in 2^\kappa \). Let \( \psi : 2^\lambda \to \PHom(F,\Z) \) be an almost extension of \( \varphi \). For each \( a \in K \), there is an open dense set \( D_a \subseteq 2^\lambda \) such that for all \( x \in D_a \), \( \psi(x)(a) = \varphi(x \restrict \kappa)(a) = f(a) \). For \( a \in F \setminus K \), there is an open dense set \( D_a \subseteq 2^\lambda \) such that for all \( x \in D_a \), \( \psi(x)(a) \) is defined. By the Baire category theorem, there is \( x \in \bigcap_{a \in F} D_a \). Then \( \psi(x)(a) \) is defined for all \( a \in F \), so \( \psi(x) \in \Hom(F,\Z) \) is an extension of \( f \). 
    \end{proof}
\end{lemma}

\subsection{Forced Whiteheadness}\label{sec:forced-whiteheadness}

In this subsection, we characterize the property of being forced to be Whitehead in terms of parametrized homomorphisms, which we will apply to prove Corollary \ref{cor:forcing-characterization}. This section is not needed for understanding the proof of Theorem \ref{thm1}, and may be skipped by readers unfamiliar with forcing.

We show that homomorphisms parametrized by \( S(\P) \) correspond to a \( \P \)-name for a homomorphism. We construct the correspondence maps, but leave the verification of the properties to the readers.

\begin{theorem}[Second Correspondence Theorem]\label{thm:correspondence}
    Let \( \P \) be a separative poset. Then there is a correspondence 
    \[\begin{tikzcd}
        \{ \text{full continuous maps \( \varphi : S(\P) \to \PHom(A,B) \)} \} \\
        \{ \text{\( \P \)-names \( \dot{\varphi} \) such that \( \P \Vdash \dot{\varphi} \in \Hom(A,B) \)} \}
        \arrow[shift right, from=1-1, to=2-1]
        \arrow[shift right, from=2-1, to=1-1]
    \end{tikzcd}\]
    such that if \( \varphi \) corresponds to \( \dot{\varphi} \), then for \( a \in A \), \( b \in B \), \( x \in \dom(\varphi,a) \), and \( p \in \P \), we have
    \begin{align*}
        \varphi(x)(a) = b &\iff \exists p \in x(p \Vdash \dot{\varphi}(\check{a}) = \check{b}), \\
        \varphi[N_p](a) = b &\iff p \Vdash \dot{\varphi}(\check{a}) = \check{b}.
    \end{align*}
    Moreover, if \( \psi \) corresponds to \( \dot{\psi} \), then 
    \[ \varphi =^* \psi \iff \P \Vdash \dot{\varphi} = \dot{\psi}. \]
    \begin{proof}[Proof sketch]
        Let \( \varphi \) be given. Define 
        \[ \dot{\varphi} = \set{((a,b)\check{\vrule height1.3ex width0pt},p)}{a \in A \land b \in B \land p \in \P \land \forall x \in N_p (\varphi(x)(a) = b)}. \]
        Conversely, let \( \dot{\varphi} \) be given. Define \( \varphi : S(\P) \to \PHom(A,B) \) by
        \[ \varphi(x)(a) = b \iff \exists p \in x(p \Vdash \dot{\varphi}(\check{a}) = \check{b}). \]
        Note that \( \varphi \) is full since densely many conditions decide \( \dot{\varphi}(\check{a}) \).
    \end{proof}
\end{theorem}

\begin{corollary}
    Let \( K \) be a subgroup of \( F \). Let \( \P \) be a separative poset. Let \( \varphi : S(\P) \to \PHom(K,B) \) and \( \psi : S(\P) \to \PHom(F,B) \) be full continuous maps. Using the notation in the correspondence theorem, we have
    \[ \text{\( \psi \) almost extends \( \varphi \)} \iff \P \Vdash \text{\( \dot{\psi} \) extends \( \dot{\varphi} \)}. \]
    \qed
\end{corollary}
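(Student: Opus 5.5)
The corollary should follow directly from the Second Correspondence Theorem (Theorem~\ref{thm:correspondence}). We unpack both sides of the equivalence elementwise and match them using the two displayed equivalences and the ``moreover'' clause. Throughout, let \( \dot{\varphi} \) and \( \dot{\psi} \) be the names corresponding to \( \varphi \) and \( \psi \).

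For the direction from left to right, assume \( \psi \) almost extends \( \varphi \). Fix \( a \in K \). There is an open dense \( D \subseteq S(\P) \) on which \( \psi(x)(a) = \varphi(x)(a) \); by fullness we may shrink \( D \) so that both values are defined on it. We show that the set of \( p \) forcing \( \dot{\psi}(\check a) = \dot{\varphi}(\check a) \) is dense.

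1. Given \( q \in \P \), first extend it to some \( p \leq q \) with \( N_p \subseteq D \). This is possible because \( \set{p}{N_p \subseteq D} \) is dense.
2. By continuity of \( \varphi \) and \( N_p \neq \emptyset \) (separativity), we may extend further to \( p' \leq p \) with \( \varphi[N_{p'}](a) = b \) for some \( b \).
3. Then \( \psi[N_{p'}](a) = b \) as well, since \( N_{p'} \subseteq D \).
4. The second displayed equivalence gives \( p' \Vdash \dot{\varphi}(\check a) = \check b = \dot{\psi}(\check a) \).

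Density yields \( \P \Vdash \dot{\psi}(\check a) = \dot{\varphi}(\check a) \) for each \( a \in K \). Since every element of \( K \) in the extension is a ground-model element, i.e.\ equal to some \( \check a \), we conclude \( \P \Vdash \dot{\psi} \restrict \check K = \dot{\varphi} \).

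For the converse, assume \( \P \Vdash \dot{\psi} \text{ extends } \dot{\varphi} \), and fix \( a \in K \). Let \( x \in \dom(\varphi,a) \cap \dom(\psi,a) \); this intersection contains an open dense set. Say \( \varphi(x)(a) = b \) and \( \psi(x)(a) = b' \).

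1. The first displayed equivalence gives \( p \in x \) with \( p \Vdash \dot{\varphi}(\check a) = \check b \), and \( p' \in x \) with \( p' \Vdash \dot{\psi}(\check a) = \check b' \).
2. Since \( x \) is a filter, take a common extension \( r \in x \) of \( p \) and \( p' \).
3. The condition \( r \) forces \( \check b = \dot{\varphi}(\check a) = \dot{\psi}(\check a) = \check b' \), so \( b = b' \).

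Hence \( \psi(x)(a) = \varphi(x)(a) \) on an open dense set, which is exactly the statement that \( \psi \) almost extends \( \varphi \).

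An alternative route is to restrict \( \psi \) to \( \PHom(K,B) \) and apply the ``moreover'' clause directly. This needs the observation that restriction commutes with the correspondence, which is immediate from the definition of \( \dot{\varphi} \) in the proof sketch.

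The only delicate point is the interaction between the names and ground-model elements, namely that equality of the names on each \( \check a \) suffices. This holds because \( \check K \) is forced to be the evaluation of \( K \), so nothing beyond the correspondence is required.
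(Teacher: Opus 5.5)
Your proof is correct and is essentially what the paper intends: the paper leaves this corollary as an immediate consequence of the Second Correspondence Theorem, most directly via your alternative route (restrict \( \psi \) to \( K \) and apply the ``moreover'' clause), and your main argument simply unpacks that clause elementwise using the two displayed equivalences. One minor correction: \( N_p \neq \emptyset \) comes from extending the filter \( \{ q : q \geq p \} \) to a maximal filter by Zorn's lemma, not from separativity.
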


\begin{theorem}
    Let \( A \) be an abelian group with free resolution \( 0 \to K \to F \to A \to 0 \). Let \( \P \) be a separative poset and let \( \B \) be its Boolean completion. Then the following are equivalent:
    \begin{enumerate}
        \item \( A \) is forced by \( \P \) to be Whitehead;
        \item Every full continuous map \( \varphi : S(\P) \to \PHom(K,\Z) \) almost extends to some full continuous map \( \psi : S(\P) \to \PHom(F,\Z) \).
    \end{enumerate} \qed
\end{theorem}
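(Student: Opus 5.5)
The plan is to deduce this from the Second Correspondence Theorem and its corollary, together with the characterization of Whiteheadness via free resolutions recalled at the start of this section. Two facts are needed. First, being Whitehead can be checked using any free resolution: \( A \) is Whitehead iff every element of \( \Hom(K,\Z) \) extends to an element of \( \Hom(F,\Z) \). Second, this statement is absolute enough to be evaluated in the generic extension. The groups \( K \subseteq F \) remain free there, since a basis in the ground model is still a basis. Hence in \( V[G] \) the sequence \( 0 \to K \to F \to A \to 0 \) is still a free resolution of \( A \). So (1) says exactly:
\[ \P \Vdash \forall \varphi \in \Hom(\check K,\Z)\, \exists \psi \in \Hom(\check F,\Z)\, (\psi \restrict \check K = \varphi). \]

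For (2) \( \Rightarrow \) (1), I would argue as follows.
\begin{enumerate}
\item Let \( \dot\varphi \) be a name with \( p \Vdash \dot\varphi \in \Hom(\check K,\Z) \). Mixing below an antichain, extend it to a name that \( \P \) forces to be a homomorphism; for instance, use \( \dot\varphi \) below \( p \) and \( 0 \) elsewhere.
\item The Second Correspondence Theorem gives a full continuous \( \varphi : S(\P) \to \PHom(K,\Z) \) corresponding to this name.
\item By (2), \( \varphi \) almost extends to some full continuous \( \psi \).
\item By the corollary, \( \P \Vdash \dot\psi \) extends \( \dot\varphi \), and \( \dot\psi \) is forced to lie in \( \Hom(F,\Z) \).
\end{enumerate}
Since \( p \) and \( \dot\varphi \) were arbitrary, (1) holds.

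For (1) \( \Rightarrow \) (2), I would start with a full continuous \( \varphi \) and take its name \( \dot\varphi \), which is forced into \( \Hom(K,\Z) \). By (1) and the maximum principle, there is a name \( \dot\psi \) with \( \P \Vdash \dot\psi \in \Hom(F,\Z) \wedge \dot\psi \restrict K = \dot\varphi \). Pulling \( \dot\psi \) back through the correspondence gives a full continuous \( \psi \), and the corollary says that \( \psi \) almost extends \( \varphi \).

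The only real obstacles are the two points used implicitly above, and they are where I would take care.
\begin{enumerate}
\item \emph{Independence of the free resolution in \( V[G] \).} Whitehead in \( V[G] \) means \( \Ext^1(A,\Z)=0 \) computed in \( V[G] \). This follows from the long exact sequence applied to the ground-model resolution, which stays a free resolution in \( V[G] \).
\item \emph{The maximum principle.} It is needed to get a single name \( \dot\psi \) rather than just a forced existential.
\end{enumerate}
Both are standard, so the theorem follows directly from the corollary.
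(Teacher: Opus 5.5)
Your proposal is correct and is the argument the paper intends: the paper states this theorem without proof, as an immediate consequence of the Second Correspondence Theorem and its corollary combined with the free-resolution characterization of Whiteheadness. You also spell out the routine forcing details the paper leaves implicit. These are that the ground-model resolution remains a free resolution in \( V[G] \), and the mixing/maximum principle used to pass between arbitrary names and names forced to lie in \( \Hom(K,\Z) \) or \( \Hom(F,\Z) \).
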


\begin{corollary}\label{cor:forced_whitehead_implies_weakly_parametrized}
    Let \( S_\kappa \) be the Stone space of the Boolean completion of \( \P_\kappa \).
    If \( A \) is forced by \( \P_\kappa \) to be Whitehead, then \( A \) is weakly \( 2^\kappa \)-parametrized Whitehead. \qed
\end{corollary}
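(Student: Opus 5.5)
The plan is to read off Corollary \ref{cor:forced_whitehead_implies_weakly_parametrized} from the preceding theorem (the characterization of forced Whiteheadness), using the fact that the maximal filter space of \( \P_\kappa \) is \( 2^\kappa \). Fix a free resolution \( 0 \to K \to F \to A \to 0 \). Since the definition of weak \( 2^\kappa \)-parametrized Whiteheadness does not depend on the resolution, it suffices to work with this one. Assume \( \P_\kappa \) forces \( A \) to be Whitehead, and let \( \varphi : 2^\kappa \to \Hom(K,\Z) \) be continuous. We will produce the required almost extension with \( \lambda = \kappa \).

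The first step is to view \( \varphi \) as a map into partial homomorphisms. Under the identification of \( S(\P_\kappa) \) with \( 2^\kappa \) (sending a maximal filter \( F \) to \( \bigcup F \), so that basic open sets correspond), \( \varphi \) becomes a map \( S(\P_\kappa) \to \Hom(K,\Z) \subseteq \PHom(K,\Z) \). It is continuous: for each \( a \in K \) and \( b \in \Z \), the set \( \set{x}{\varphi(x)(a)=b} \) is open, because \( \varphi \) is continuous into the product topology on \( \Z^K \). It is full, since \( \dom(\varphi,a) \) is the whole space for every \( a \). The poset \( \P_\kappa \) is separative: if \( q \not\leq p \), then either \( q \) disagrees with \( p \) somewhere, or \( p \) has a coordinate outside \( \dom(q) \), and in either case we can extend \( q \) to a condition incompatible with \( p \). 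So the preceding theorem applies.

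Applying the implication (1)\( \Rightarrow \)(2) of that theorem, we obtain a full continuous \( \psi : S(\P_\kappa) \to \PHom(F,\Z) \) that almost extends \( \varphi \). That is, for each \( a \in K \) there is an open dense set of \( x \) with \( \psi(x)(a) = \varphi(x)(a) \). Transporting back to \( 2^\kappa \) and taking \( \lambda = \kappa \), we have \( x \restrict \kappa = x \), so this is exactly the condition in the definition of weak \( 2^\kappa \)-parametrized Whiteheadness.

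There is no substantial obstacle. The only points needing care are that the homeomorphism \( S(\P_\kappa) \cong 2^\kappa \) preserves the notions of open dense set, continuity into \( \PHom \), and fullness, which holds because these are all defined from the basic open sets \( N_p \), which match on both sides; and the routine check of separativity. Note also that no boundedness is needed here, in contrast to Lemma \ref{lem:condensed_whitehead_implies_weakly_parametrized}, because the forcing characterization allows unbounded maps and the weak notion does not demand boundedness.
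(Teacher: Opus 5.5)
Your proposal is correct and follows the paper's intended argument. The corollary is read off from the (1)\( \Rightarrow \)(2) direction of the preceding characterization of forced Whiteheadness, using \( S(\P_\kappa) \cong 2^\kappa \), the observation that a continuous \( \varphi : 2^\kappa \to \Hom(K,\Z) \) is automatically full continuous into \( \PHom(K,\Z) \), and the choice \( \lambda = \kappa \); your added checks (separativity of \( \P_\kappa \), and that no boundedness is required) are accurate and fill in details the paper leaves implicit.
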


\section{Filtrations}\label{sec:filtration}

A key idea in proving Theorem \ref{thm1} is to write the group as a union of subgroups of smaller cardinality.

\begin{definition}
    Let \( \kappa \) be a cardinal. A \emph{\( \kappa \)-filtration} of an abelian group \( A \) is a sequence \( \ab{A_\alpha : \alpha < \kappa} \) of subgroups of \( A \) such that 
    \begin{enumerate}
        \item \( |A_\alpha| < \kappa \) for all \( \alpha < \kappa \), and
        \item \( A_\alpha \subseteq A_\beta \) for all \( \alpha < \beta < \kappa \), and
        \item \( A_\beta = \bigcup_{\alpha < \beta} A_\alpha \) for all limit ordinal \( \beta < \kappa \), and
        \item \( A = \bigcup_{\alpha < \kappa} A_\alpha \).
    \end{enumerate}
\end{definition}

It is not hard to show that if \( \kappa > \omega \) is regular, then every abelian group of size \( \kappa \) has a \( \kappa \)-filtration.

\begin{lemma}\label{lem2}
    Let \( \kappa > \omega \) be regular. Let \( \ab{A_\alpha : \alpha < \kappa} \) be a \( \kappa \)-filtration of \( A \). Suppose there is a club \( C \subseteq \kappa \) such that \( A_{\min(C)} \) is free, and \( A_\beta/A_\alpha \) is free for all \( \alpha < \beta \) in \( C \). Then \( A \) is free.
    \begin{proof}
        Let \( \ab{\gamma_\alpha : \alpha < \kappa} \) be the increasing enumeration of \( C \). Let \( B_0 \) be a basis of \( A_{\gamma_0} = A_{\min(C)} \). Inductively, assume \( B_\alpha \) is a basis of \( A_{\gamma_\alpha} \). Then since \( A_{\gamma_\alpha} \) and \( A_{\gamma_{\alpha+1}}/A_{\gamma_\alpha} \) are free, \( B_\alpha \) can be extended to a basis \( B_{\alpha+1} \) of \( A_{\gamma_{\alpha+1}} \). If \( \beta \) is a limit ordinal, then \( B_\beta = \bigcup_{\alpha < \beta} B_\alpha \) is a basis of \( A_{\gamma_\beta} = \bigcup_{\alpha < \beta} A_{\gamma_\alpha} \). Then \( B = \bigcup_{\alpha < \kappa} B_\alpha \) is a basis of \( A \), and thus \( A \) is free. 
    \end{proof}
\end{lemma}

Recall that an abelian group \( A \) is \emph{\( \kappa \)-free} if every subgroup of \( A \) of size less than \( \kappa \) is free.

\begin{lemma}
    Let \( \kappa > \omega \) be regular. Let \( A \) be a \( \kappa \)-free abelian group of size \( \kappa \) that is not free. Then there is a \( \kappa \)-filtration \( \ab{A_\alpha : \alpha < \kappa} \) of \( A \) such that \( S = \set{\alpha < \kappa}{A_{\alpha+1}/A_\alpha \text{ is not free}} \) is stationary.
    \begin{proof}
        Let \( \ab{B_\alpha : \alpha < \kappa} \) be any \( \kappa \)-filtration of \( A \). Let 
        \[ S = \set{\alpha < \kappa}{\exists \beta > \alpha, B_\beta/B_\alpha \text{ is not free}}. \]
        We show that \( S \) is stationary. If not, then there is a club \( C \subseteq \kappa \) such that \( B_\beta / B_\alpha \) is free for all \( \alpha < \beta \) in \( C \). Moreover, \( B_{\min(C)} \) is free since \( A \) is \( \kappa \)-free. Then \( A \) is free by Lemma \ref{lem2}, which is a contradiction.

        For each \( \alpha \in S \), let \( \beta_\alpha > \alpha \) be such that \( B_{\beta_\alpha}/B_\alpha \) is not free. Let 
        \[ D = \set{\gamma < \kappa}{\forall \alpha \in S \cap \gamma, \beta_\alpha < \gamma}. \]
        Then \( D \) is a club. Let \( \set{\gamma_\xi}{\xi < \kappa} \) be the increasing enumeration of \( D \).  Note that if \( \gamma \in D \) and \( \alpha \in S \cap \gamma \), then \( B_{\gamma}/B_{\alpha} \) is not free since it contains the non-free subgroup \( B_{\beta_\alpha}/B_\alpha \). In particular, if \( \gamma_\xi \in S \), then \( B_{\gamma_{\xi+1}}/B_{\gamma_\xi} \) is not free.
        
        Let \( A_\xi = B_{\gamma_\xi} \) for each \( \xi < \kappa \). Let \( E' = \set{\xi}{\gamma_\xi \in S} \). Then \( A_{\xi+1}/A_\xi \) is not free for all \( \xi \in E' \). Moreover, \( E' \) is stationary since it is the preimage of a stationary set under an increasing continuous function.
    \end{proof}
\end{lemma}

By \cite[\nopp XII.1.4]{EklofMekler1990}, a filtrated group has a filtrated free resolution:

\begin{lemma}\label{lem:filtrated_free_resolution}
    Let \( \kappa > \omega \) be regular. Let \( A \) be an abelian group with \( \kappa \)-filtration \( \ab{A_\alpha : \alpha < \kappa} \). Then there is a free resolution \( 0 \to K \to F \to A \to 0 \) with the following properties: 
    \begin{enumerate}
        \item \( K = \bigoplus_{\beta<\kappa} K_\beta^* \) and \( F = \bigoplus_{\beta<\kappa} F_\beta^* \) are direct sums of free abelian groups.
        \item Let \( K_\alpha = \bigoplus_{\beta < \alpha} K_\beta^* \) and \( F_\alpha = \bigoplus_{\beta < \alpha} F_\beta^* \). Then \( \ab{K_\alpha : \alpha < \kappa} \) and \( \ab{F_\alpha : \alpha < \kappa} \) are \( \kappa \)-filtrations of \( K \) and \( F \), respectively.
        \item For all \( \alpha < \kappa \), there is a commutative diagram
        \[ \begin{tikzcd}
        0 \arrow[r] & K_\alpha \arrow[r] \arrow[d] & F_\alpha \arrow[r] \arrow[d] & A_\alpha \arrow[r] \arrow[d] & 0 \\
        0 \arrow[r] & K \arrow[r] & F \arrow[r] & A \arrow[r] & 0
    \end{tikzcd} \]
    where the rows are free resolutions and the vertical maps are inclusions. \qed
    \end{enumerate}
\end{lemma}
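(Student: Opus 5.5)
We build the resolution by transfinite recursion along the filtration, following the standard construction of Eklof and Mekler. Since \( |A_\alpha| < \kappa \) and the filtration is continuous, we define free groups \( F_\beta^* \) with generators indexed by the "new" elements \( A_{\beta+1} \setminus A_\beta \). More precisely, set \( F_\beta^* \) to be the free abelian group on the set \( A_{\beta+1} \setminus A_\beta \) for \( \beta \) a successor or zero stage; for \( \beta = 0 \) also include \( A_0 \). To avoid indexing trouble we use the convention \( A_{-1} = \emptyset \) and let \( F_\beta^* \) be free on \( A_\beta \setminus \bigcup_{\gamma<\beta} A_\gamma \). Then \( F = \bigoplus_{\beta<\kappa} F_\beta^* \) is the free group on \( A \), and \( \pi : F \to A \) sends each generator to itself. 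Set \( F_\alpha = \bigoplus_{\beta<\alpha} F_\beta^* \), which is free on \( \bigcup_{\beta<\alpha} A_\beta \). This is \( A_{\alpha'} \) where \( \alpha' \) is the predecessor if \( \alpha \) is a successor, or \( A_\alpha \) itself by continuity if \( \alpha \) is a limit. A harmless reindexing (shifting successor indices by one, which preserves being a \( \kappa \)-filtration) makes \( F_\alpha \) exactly the free group on \( A_\alpha \). Then \( \pi[F_\alpha] = A_\alpha \).

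Next set \( K = \ker \pi \) and \( K_\alpha = K \cap F_\alpha = \ker(\pi \restrict F_\alpha) \), so each row \( 0 \to K_\alpha \to F_\alpha \to A_\alpha \to 0 \) is exact and the inclusions commute. We still have to show that \( K = \bigoplus K_\beta^* \) with \( K_\alpha = \bigoplus_{\beta<\alpha} K_\beta^* \). The key observation is that \( K_{\alpha+1}/K_\alpha \) embeds into \( F_{\alpha+1}/F_\alpha \cong F_\alpha^* \) via \( k + K_\alpha \mapsto k + F_\alpha \), because \( K_{\alpha+1} \cap F_\alpha = K_\alpha \). It is therefore a subgroup of a free group, hence free. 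So we may choose a complement \( K_\alpha^* \subseteq K_{\alpha+1} \) with \( K_{\alpha+1} = K_\alpha \oplus K_\alpha^* \).

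Continuity at limits holds because \( K_\beta = K \cap \bigcup_{\alpha<\beta} F_\alpha = \bigcup_{\alpha<\beta} K_\alpha \). Transfinite induction then gives \( K_\alpha = \bigoplus_{\beta<\alpha} K_\beta^* \), and in particular each \( K_\alpha \) is free. Sizes are bounded by \( |F_\alpha| \leq \max(|A_\alpha|,\omega) < \kappa \), using that \( \kappa > \omega \); unions are exhaustive since \( F = \bigcup F_\alpha \). This yields (1)–(3).

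The main obstacle is the bookkeeping of indices so that \( F_\alpha \) is free on exactly \( A_\alpha \) while keeping \( F_\alpha = \bigoplus_{\beta<\alpha} F_\beta^* \) with strict inequality. If the shift is awkward, the fallback is to take \( F_\beta^* \) free on \( A_{\beta+1} \setminus A_\beta \) (plus \( A_0 \) in \( F_0^* \)) and define \( F_\alpha \) accordingly. The substantive step, freeness of \( K_{\alpha+1}/K_\alpha \) via the embedding into \( F_\alpha^* \), is straightforward.
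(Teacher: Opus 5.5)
The paper gives no proof of this lemma; it only cites Eklof--Mekler XII.1.4. Your core argument is exactly that standard construction, and it is correct. You take $F$ free on $A$, $K=\ker\pi$ and $K_\alpha=K\cap F_\alpha$. You then use $K_{\alpha+1}\cap F_\alpha=K_\alpha$ to embed $K_{\alpha+1}/K_\alpha$ into the free group $F_{\alpha+1}/F_\alpha\cong F^*_\alpha$, split off complements $K^*_\alpha$, and use $K_\lambda=\bigcup_{\alpha<\lambda}K_\alpha$ at limits.

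The indexing has to be settled by your fallback, not by your main line.

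\textbf{The reindexing is not harmless.} Your primary convention ($F^*_\beta$ free on $A_\beta\setminus\bigcup_{\gamma<\beta}A_\gamma$) makes $F_{\gamma+1}$ free on $A_\gamma$. You propose to repair this by reindexing the filtration to $\bigcup_{\beta<\alpha}A_\beta$. But the lemma is about the \emph{given} filtration. Moreover, that reindexing makes every quotient at a limit index trivial and moves $A_{\lambda+1}/A_\lambda$ to index $\lambda+1$. Theorem~\ref{thm:main} relies on $A^*_\alpha\cong F_{\alpha+1}/(F_\alpha+K^*_\alpha)$ at a \emph{limit} $\alpha\in S$, so the reindexed version would be useless there.

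\textbf{The fallback must be applied at every $\beta$, limits included.} Your opening sentence restricted $A_{\beta+1}\setminus A_\beta$ to successor and zero stages. That would drop $A_{\lambda+1}\setminus A_\lambda$ for limit $\lambda$, which are exactly the stages that matter later. With $F^*_\beta$ free on $A_{\beta+1}\setminus A_\beta$ for all $\beta$, and $A_0$ added to $F^*_0$, continuity of the filtration gives $F_\alpha$ free on $A_\alpha$ for every $\alpha\ge 1$.

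\textbf{The case $\alpha=0$ cannot be handled as stated.} Since $F_0=\bigoplus_{\beta<0}F^*_\beta=0$, the row for $\alpha=0$ is exact only if $A_0=0$. The lemma tacitly assumes this normalization. It is harmless: replace $A_0$ by $\{0\}$. This changes the filtration only at index $0$, so the stationarity of $S$ in Theorem~\ref{thm:main} is unaffected.

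With these fixes your proof is complete.
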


\section{Proof of the Main Theorem}\label{sec:main-theorem}

We first prove some lemmas.

Recall that a poset \( \P \) has the \emph{countable chain condition (ccc)} if every antichain in \( \P \) is countable. It is well known that \( \P_\kappa = \Add(\omega,\kappa) \) has the ccc \cite[Lemma III.3.7]{Kunen2013Set}.

\begin{lemma}\label{lemma:full-continuous}
    If \( \varphi : 2^\kappa \to \PHom(A,B) \) is full continuous, then for each \( a \in A \), there is an open dense \( D \subseteq 2^\kappa \) and a countable set \( \sigma \subseteq \kappa \) such that for all \( x,y \in D \), if \( x \restrict \sigma = y \restrict \sigma \), then \( \varphi(x)(a) \) and \( \varphi(y)(a) \) are both defined and are equal.
    \begin{proof}
        Since \( \varphi \) is full, \( \mathbb{D} = \set{p \in \P_\kappa}{\forall x \in N_p (\varphi(x)(a) \text{ is defined})} \) is open dense. Choose a maximal antichain \( \mathbb{A} \subseteq \mathbb{D} \) such that for all \( p \in \mathbb{A} \), there is \( b \in B \) such that \( \varphi(x)(a) = b \) for all \( x \in 2^\kappa \) extending \( p \). Let \( D = \set{x \in 2^\kappa}{\exists p \in \mathbb{A}(p \subseteq x)} \). Then \( D \) is open since it is the union of the basic open sets \( N_p \) for \( p \in \mathbb{A} \). We show that \( D \) is dense. Let \( N_p \) be any basic open set. Then \( p \) is compatible with some \( q \in \mathbb{A} \). Let \( r \leq p,q \). Then \( N_r \subseteq N_p \cap N_q \subseteq N_p \cap D \).

        Let \( \sigma = \bigcup_{p \in \mathbb{A}} \dom(p) \). Since \( \P_\kappa \) is ccc, \( \mathbb{A} \) is countable and thus \( \sigma \) is countable. Let \( x,y \in D \) with \( x \restrict \sigma = y \restrict \sigma \). Since \( \mathbb{A} \) is an antichain, there is \( p \in \mathbb{A} \) such that \( p \subseteq x \) and \( p \subseteq y \). Let \( b \in B \) be such that \( \varphi(z)(a) = b \) for all \( z \in 2^\kappa \) extending \( p \). Then \( \varphi(x)(a) = \varphi(y)(a) = b \).
    \end{proof}
\end{lemma}

\begin{lemma}[See {\cite[\nopp XII.1.4]{EklofMekler1990}}]\label{lemma:not-weakly-parametrized}
    Let \( F \) be a free abelian group and let \( K,H \subseteq F \) be subgroups. Suppose \( F/H \) is free and \( F/(H+K) \) is not weakly \( 2^\kappa \)-parametrized Whitehead. Then there are continuous maps \( f_0,f_1 : 2^\kappa \to \Hom(K,\Z) \) such that for all \( \lambda \geq \kappa \), there are no full continuous almost extensions \( h_0,h_1 : 2^\lambda \to \PHom(F,\Z) \) such that \( h_0 =^* h_1 \) on \( H \).
    \begin{proof}
        Let \( f_0(x) = 0 \) for all \( x \in 2^\kappa \).

        Since \( F/(H+K) \cong (F/H)/((H+K)/H) \) is not weakly \( 2^\kappa \)-parametrized Whitehead, there is a continuous \( g : 2^\kappa \to \Hom((H+K)/H,\Z) \) with no full continuous almost extension \( 2^\lambda \to \PHom(F/H,\Z)\). Let \( f_1(x)(k) =g(x)(k+H)\).

        Suppose towards a contradiction that there are \( h_0,h_1 : 2^\lambda \to \PHom(F,\Z) \) as stated above. Let \( h' = h_1-h_0 \). Then \( h' =^* 0 \) on \( H \). Since \( F/H \) is free, there is a homomorphism \( s : F/H \to F \) such that \( \pi(s(c)) = c \) for all \( c \in F/H \). For \( x \in 2^\lambda \) and \( c \in F/H \), let \( g'(x)(c) = h'(x)(s(c)) \). It is not hard to check that \( g' : 2^\lambda \to \PHom(F/H,\Z) \) is a full continuous map. 

        We show that \( g' \) almost extends \( g \). Fix \( k \in K \). Then \( s(k+H) \in k+H \), so \( s(k+H) = k+a \) for some \( a \in H \). There is an open dense set of \( x \in 2^\lambda \) such that \( h'(x)(a) = 0 \) and \( h_i(x)(k) = f_i(x \restrict \kappa)(k) \) for \( i = 0,1 \). For such \( x \), we have
        \[ g'(x)(k+H) = h'(x)(k+a) = h'(x)(k) = f_1(x \restrict \kappa)(k) = g(x \restrict \kappa)(k+H), \]
        which contradicts the choice of \( g \).
    \end{proof}
\end{lemma}

We introduce some notations for manipulating binary sequences. 

For \( x,y \in 2^\kappa \), define \( x \oplus y \in 2^\kappa \) as follows. For each limit ordinal \( \alpha < \kappa \) and \( n < \omega \), let \( (x \oplus y)(\alpha + 2n) = x(\alpha+n) \) and \(  (x \oplus y)(\alpha + 2n+1) = y(\alpha+n) \). Note that if \( \alpha \) is limit, then \( (x \oplus y)(\alpha) = x(\alpha) \) and \( (x \oplus y) \restrict \alpha = x \restrict \alpha \oplus y \restrict \alpha \). Moreover, the map \( \oplus : 2^\kappa \times 2^\kappa \to 2^\kappa \) is a homeomorphism. 

For \( x \in 2^\kappa \), define \( x_\alpha^i \in 2^\kappa \) by \( x_\alpha^i(\alpha) = i \) and \( x_\alpha^i(\beta) = x(\beta) \) for \( \beta \neq \alpha \).

The following lemma is straightforward to verify.

\begin{lemma}\label{lem:open_continuous_maps}
    Let \( x,y \in 2^\kappa \). Let \( \sigma, \tau \subseteq \kappa \) be disjoint. Let \( \rho : \gamma \to \kappa \) be an injection. Then the following maps are open and continuous: 
    \[ \setlength{\tabcolsep}{18pt}
    \begin{tabular}{cccc}
        \( 2^\kappa \to 2^{\sigma} \times 2^{\tau} \) & \( 2^\kappa \to 2^\gamma \) & \( 2^\kappa \times 2^\kappa \to 2^\kappa \) & \( 2^\kappa \to 2^\kappa \) \\
        \( x \mapsto (x \restrict \sigma, x \restrict \tau) \) & \( x \mapsto x \circ \rho \) & \( (x,y) \mapsto x \oplus y \) & \( x \mapsto x_\alpha^i \)
    \end{tabular} \]
    \qed
\end{lemma}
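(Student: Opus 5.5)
Since the basic open sets \( N_p \) (for \( p \) a finite partial function) form a base, the plan is to check continuity and openness of each of the four maps on basic open sets. Each verification reduces to a statement about finite partial functions.

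For \( x \mapsto (x \restrict \sigma, x \restrict \tau) \), continuity holds because the preimage of \( N_p \times N_q \), with \( p \) a finite partial function on \( \sigma \) and \( q \) one on \( \tau \), is \( N_{p \cup q} \); this is well defined since \( \sigma \cap \tau = \emptyset \). For openness, the image of \( N_r \) is \( N_{r \restrict \sigma} \times N_{r \restrict \tau} \): given \( u \supseteq r \restrict \sigma \) and \( v \supseteq r \restrict \tau \), we can glue \( u \), \( v \), and \( r \) on the remaining coordinates, and extend arbitrarily elsewhere.

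The map \( x \mapsto x \circ \rho \) is handled the same way. The preimage of \( N_p \) with \( p \) partial on \( \gamma \) is \( N_{p \circ \rho^{-1}} \). The image of \( N_r \) is \( N_{r \circ \rho} \), where \( r \circ \rho \) is understood as a partial function on \( \rho^{-1}[\dom r] \). This step uses the injectivity of \( \rho \): any \( u \) extending \( r \circ \rho \) can be realized as \( x \circ \rho \) by setting \( x(\rho(\xi)) = u(\xi) \), letting \( x \) agree with \( r \) off \( \operatorname{ran}\rho \), and extending arbitrarily.

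For \( \oplus \), the key observation is that \( \oplus \) is induced by a bijection between \( \kappa \) and \( \kappa \sqcup \kappa \). Namely, coordinate \( \alpha + 2n \) corresponds to \( (0, \alpha+n) \) and coordinate \( \alpha + 2n+1 \) to \( (1, \alpha+n) \), for limit \( \alpha \) or \( \alpha = 0 \). Thus \( \oplus \) is a coordinate permutation of the product \( 2^{\kappa \sqcup \kappa} \cong 2^\kappa \times 2^\kappa \). Such a map sends basic open sets to basic open sets in both directions, so it is a homeomorphism, and in particular open and continuous.

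For \( x \mapsto x_\alpha^i \), continuity holds because the preimage of \( N_p \) is \( N_{p \restrict (\dom p \setminus\{\alpha\})} \) if \( \alpha \notin \dom p \) or \( p(\alpha) = i \), and is empty otherwise. The image of \( N_r \) is \( N_{r'} \), where \( r' = r \restrict (\dom r \setminus \{\alpha\}) \cup \{(\alpha,i)\} \), which is open in \( 2^\kappa \). Note that this map is open as a map into \( 2^\kappa \) only because its image \( N_{\{(\alpha,i)\}} \) is itself open.

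The only point needing any care is the bookkeeping for \( \oplus \), specifically confirming that the described coordinate correspondence is a bijection of \( \kappa \) with two copies of \( \kappa \). This follows because every ordinal below \( \kappa \) is uniquely of the form \( \alpha + m \) with \( \alpha \) limit or \( 0 \) and \( m < \omega \). For infinite \( \kappa \), each block \( [\alpha, \alpha+\omega) \) lies inside \( \kappa \).
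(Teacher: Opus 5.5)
Your proof is correct, and it is the routine check the paper has in mind: the paper gives no argument beyond calling the lemma straightforward (plus the remark, when defining \( \oplus \), that it is a homeomorphism). Your computation of preimages and images of basic open sets \( N_p \) fills this in, including the use of injectivity of \( \rho \) for openness of \( x \mapsto x \circ \rho \), and the bijection \( \alpha + 2n + j \mapsto (j, \alpha+n) \) between \( \kappa \) and \( \{0,1\} \times \kappa \) properly justifies the paper's homeomorphism claim for \( \oplus \).
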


\begin{lemma}\label{lem:fullness-preserved}
    If \( \varphi : X \to \PHom(A,B) \) is full and \( \pi : Y \to X \) is an open continuous map, then \( \varphi \circ \pi \) is full.
    \begin{proof}
        The preimage of an open dense set under an open continuous map is open dense.
    \end{proof}
\end{lemma}

The following theorem together with Lemma \ref{lem:condensed_whitehead_implies_weakly_parametrized} immediately implies Theorem \ref{thm1}, and together with Corollary \ref{cor:forced_whitehead_implies_weakly_parametrized} implies Corollary \ref{cor:forcing-characterization}.

\begin{theorem}\label{thm:main}
    Let \( A \) be an abelian group of size \( \kappa \). If \( A \) is weakly \( 2^\kappa \)-parametrized Whitehead, then \( A \) is free.
    \begin{proof}
        By induction on \( \kappa \), assume the theorem holds for groups of size less than \( \kappa \). 
        
        If \( B \) is a subgroup of \( A \) with \( |B| = \gamma < \kappa \), then \( B \) is weakly \( 2^\kappa \)-parametrized Whitehead and thus weakly \( 2^\gamma \)-parametrized Whitehead. So, \( B \) is free by the induction hypothesis. Hence, \( A \) is \( \kappa \)-free. If \( \kappa \) is singular, then \( A \) is free by Shelah's singular compactness theorem \cite{ShelahSingularCompactness}. So, we may assume \( \kappa \) is regular. Moreover, since countable weakly \( 2^\kappa \)-parametrized Whitehead groups are free, we may assume \( \kappa \) is uncountable. 

        Let \( \ab{A_\alpha : \alpha < \kappa} \) be a \( \kappa \)-filtration of \( A \) such that \( S = \set{\alpha < \kappa}{A_\alpha^* \text{ is not free}} \) is stationary, where \( A_\alpha^* = A_{\alpha+1}/A_\alpha \). Let \( 0 \to K \to F \to A \to 0 \) be the corresponding filtrated free resolution of \( A \) given by Lemma \ref{lem:filtrated_free_resolution}. 

        For \( \alpha \in S \), \( A_\alpha^* \) is not weakly \( 2^\kappa \)-parametrized Whitehead by the induction hypothesis and Lemma \ref{lem:weakly_parametrized_implies_smaller_weakly_parametrized}. Note that \( A_\alpha^* \cong F_{\alpha+1}/(F_\alpha + K_\alpha^*) \). By Lemma \ref{lemma:not-weakly-parametrized}, there are continuous maps \( f_{0\alpha}, f_{1\alpha} : 2^\kappa \to \Hom(K_\alpha^*,\Z) \) such that for all \( \lambda \geq \kappa \), there are no full continuous almost extensions \( h_0,h_1 : 2^\lambda \to \PHom(F_{\alpha+1},\Z) \) such that \( h_0 =^* h_1 \) on \( F_\alpha \). For \( \alpha \notin S \), we take \( f_{0\alpha} = f_{1\alpha} = 0 \).

        Let \( \varphi : 2^\kappa \to \Hom(K,\Z) \) be given by 
        \[ \varphi(x \oplus y) = \bigoplus_{\alpha < \kappa} f_{x(\alpha)\alpha}(y). \]
        It is easy to see that \( \varphi \) is continuous. Since \( A \) is weakly \( 2^\kappa \)-parametrized Whitehead, there is a full continuous almost extension \( \psi : 2^\lambda \to \PHom(F,\Z) \) of \( \varphi \). By Lemma \ref{lemma:full-continuous}, for each \( a \in F \), there is an open dense \( D_a \subseteq 2^\lambda \) and a countable set \( \sigma_a \subseteq \kappa \) such that for \( z \in D_a \), \( \psi(z)(a) \) is defined and only depends on \( z \restrict \sigma_a \) and \( z \restrict (\lambda \setminus \kappa) \). We may also assume that \( \psi(z)(a) = \varphi(z \restrict \kappa)(a) \) for all \( a \in K \) and \( z \in D_a \). 
        
        Let \( C = \set{\alpha < \kappa}{\forall a \in F_\alpha (\sigma_a \subseteq \alpha)} \). Then \( C \) is a club. Since \( S \) is stationary, there is a limit ordinal \( \alpha \in S \cap C \).

        Fix an injection \( \rho : \lambda \to \lambda^+ \setminus \lambda \). For \( i \in \{ 0,1 \} \), let \( r_i : 2^{\lambda^+} \to 2^\lambda \) be defined by
        \[ r_i(y) = (y \circ \rho)_\alpha^i \oplus (y \restrict \lambda). \]
        Let \( h_i : 2^{\lambda^+} \to \PHom(F_{\alpha+1},\Z) \) be given by 
        \[ h_i(y) = \psi(r_i(y)) \restrict F_{\alpha+1}. \]
        By Lemma \ref{lem:open_continuous_maps}, the map \( r_i \) is open and continuous, being a composition of open continuous maps. Let \( C_a = r_0^{-1}(D_a) \cap r_1^{-1}(D_a) \). Then \( C_a \) is open dense in \( 2^{\lambda^+} \). For \( a \in F_{\alpha+1} \) and \( y \in C_a \), \( h_i(y)(a) \) is defined. For \( a \in F_\alpha \) and \( y \in C_a \), since \( r_0(y) \) and \( r_1(y) \) agree on \( \sigma_a \subseteq \alpha \) and \( \lambda \setminus \kappa \), we have 
        \[ h_0(y)(a) = \psi(r_0(y))(a) = \psi(r_1(y))(a) = h_1(y)(a). \]
        For \( a \in K_\alpha^* \) and \( y \in C_a \), 
        \[ h_i(y)(a) = \psi((y \circ \rho)_\alpha^i \oplus (y \restrict \lambda))(a) = \varphi((y \circ \rho)_\alpha^i \restrict \kappa \oplus (y \restrict \kappa))(a) = f_{i\alpha}(y \restrict \kappa)(a). \]
        Hence, \( h_i \) is a full continuous almost extension of \( f_{i\alpha} \) with \( h_0 =^* h_1 \) on \( F_\alpha \). This contradicts the choice of \( f_{0\alpha} \) and \( f_{1\alpha} \).
    \end{proof}
\end{theorem}

\printbibliography

\end{document}